
\documentclass[11pt,dvips,leqno]{article} 
                          
\usepackage{amsmath}
\usepackage{theorem}
\usepackage{amsfonts}
\usepackage{amsbsy}
\usepackage{amssymb}


\parindent0em
\parskip1.5ex plus0.5ex minus0.5ex

\newtheorem{satz}{Theorem}[section]
\newtheorem{lemm}[satz]{Lemma}
\newtheorem{koro}[satz]{Corollary}
{\theorembodyfont{\upshape} \newtheorem{beme}[satz]{Remark}}

\newenvironment{proof}{{\it Proof:}}{\mbox{}\hfill$\Box$}

\newcommand{\img}{\operatorname{im}} 
\newcommand{\id}{\operatorname{id}} 
\newcommand{\ord}{\operatorname{ord}} 
\newcommand{\divi}{\operatorname{div}} 
\newcommand{\spec}{\operatorname{Spec}} 
\newcommand{\spp}{\operatorname{supp}} 
\newcommand{\gal}{\operatorname{Gal}} 
\newcommand{\sgn}{\operatorname{sgn}} 
\newcommand{\ext}{\operatorname{Ext}} 
 

\begin{document} 

{\bf\huge Mahler measures, K-theory and values of 
L-functions}
 \\
 \\
by Hubert Bornhorn \\ 
\setcounter{section}{-1}

{\bf Abstract} The Mahler measure of a polynomial $P$ in $n$ variables 
is defined as the mean of $\log|P|$ over the $n$-dimensional torus. 
For certain polynomials with integer coefficients in two variables the Mahler 
measure is known to be related to special values of L-functions of arithmetic 
objects (e.g. Dirichlet characters and elliptic curves over $\mathbb{Q}$). 
Inspired by work of Deninger (\cite{deni97}) Boyd has investigated this 
relationship numerically (\cite{boyd98}). In this paper we reduce some 
conjectures of Boyd to Beilinson`s conjectures on special values of 
L-functions. The methods in use are widely of K-theoretical nature.   

\section{Introduction}

The (logarithmic) Mahler measure of a polynomial 
$P\in\mathbb{C}[t_1,t_2]$ is defined as 
\begin{eqnarray*}
m(P)    & := &  \frac{1}{(2\pi i)^2}\int\limits_{T^2}\log |P(z_1,z_2)|
\frac{dz_1}{z_1}\wedge\frac{dz_2}{z_2} \\
        & = &   \int_0^1 \int_0^1\log |P(e^{2\pi i \alpha_1},
e^{2\pi i \alpha_2})| d\alpha_1 d\alpha_2                        
\end{eqnarray*}
where $T^2:=S^1\times S^1\subset\mathbb{C}^2$ is the real  
$2$-torus. \\
In \cite{smyt81b} Smyth discovered the identity 
\begin{equation} \label{iden}
m(t_1+t_2+1)=L'(\chi,-1)
\end{equation}
where $\chi$ is the quadratic character of conductor $3$ and 
$L(\chi,s)$ is the Dirichlet series associated to it. Some similar formulas 
can be found in \cite{boyd81b} and \cite{ray87}. The proofs of these 
identities however 
are analytical and do not shed much light on the deeper reasons for this 
phenomenon. \\
This was the situation until Deninger in \cite{deni97} related formulas like 
(\ref{iden}) to Beilinson's conjectures on special values of L-functions.
Assuming these conjectures he found in some way higher dimensional analogues 
of (\ref{iden}) such as 
\begin{equation} \label{such_as}
m(t_1^2 t_2+t_1 t_2^2+t_1 t_2+t_1+t_2)=*L'(E,0)
\end{equation}   
where * denotes (throughout the whole paper) an unknown non-vanishing
rational 
number and $L(E,s)$ is the Hasse-Weil L-function of the elliptic curve 
$E/\mathbb{Q}$ obtained by taking the projective closure of the zero locus 
\begin{equation*} 
t_1^2 t_2+t_1 t_2^2+t_1 t_2+t_1+t_2=0
\end{equation*}  
and adding a suitable origin. \\
This example was the starting point for extensive numerical computations 
done by Boyd (see \cite{boyd98}). He found (numerically) hundreds of
formulas like 
(\ref{such_as}) and similar ones. He also stated a condition under the
presence of which formulas of type (\ref{such_as}) should
hold. Rodriguez-Villegas showed in \cite{rodr98}
that it is precisely this condition that makes it possible to apply 
Beilinson's conjectures. For a special class of polynomials this was
(up to integrality questions) independently done by the author (see
chapter \ref{two}). 

In this paper we set forth the ideas of \cite{deni97} and try to
interprete further parts of the
work of Boyd in the light of Beilinson's conjectures. We succeed in
the following cases: 
\begin{itemize}
\item Boyd observes that some (irreducible) polynomials produce formulas of
  mixed type, i.e. the Mahler measure of such a polynomial is equal to
\begin{equation*} 
*L'(\chi,-1) + *L'(E,0)
\end{equation*}     
for some Dirichlet character $\chi$ and some elliptic curve $E$ over 
$\mathbb{Q}$. For this topic see chapter
\ref{four} and \ref{five}.
\item Another conjecture of Boyd states that no formula of mixed type
  will occur as long as the polynomial is reciprocal. For this problem
  see chapter \ref{one}.
\item Boyd also found formulas of type (\ref{such_as}) where the
  zero locus of the polynomial is of genus two. In those cases the
  elliptic curve E in (\ref{such_as}) turns out to be one of the
  (generally) non-isogenous factors of the Jacobian of the zero locus.
See chapter \ref{three} for an explanation for this rather ``miraculous'' 
occurence. 
\end{itemize}
These notes represent a shortened version of the author's thesis 
\cite{born98}. The reader who wants to see detailed proofs rather than
(just) the underlying basic ideas is referred to this work. \\
Further work in the spirit of \cite{deni97} was done in the following papers: 
In \cite{stan01} the three variable example
\begin{equation*}
m(1+t_1+ t_2+t_3)=*\zeta'(-2) 
\end{equation*}
of Smyth was reduced to the (due to Borel) known Beilinson conjectures for 
$\spec{\mathbb{Q}}$. 
In \cite{den_bes99} an
approach to p-adic analogs of Mahler measures and formulas of type 
(\ref{such_as}) can be found. \\
The author expresses his deep gratitude to his 
``Doktorvater'' Christopher Deninger for introducing him to this area
of research, for making valuable suggestions and last but not least
for being ready to discuss. The author also wants to thank David Boyd
from the University of British Columbia (Vancouver/Canada) for a 
number of interesting and stimulating e-mail correspondence on his  
aforementioned numerical experiments.
\\ 
\\ 
\\ 
{\bf Contents}

\begin{enumerate}
\item Mahler measures and symbols 
\item Boundary maps in $K$-theory 
\item Curves of genus $2$
\item Formulas of mixed type
\item A general philosophy
\end{enumerate}

\section{Mahler measures and symbols}

\label{one}
In this chapter we will rewrite the Mahler measure of a polynomial in two 
variables in a way that allows us to apply $K$-theoretical
methods. The main idea of the following is that in building the Mahler
measure of a polynomial by definition we have to integrate over a
differential form which can be seen as a certain cup product lying in 
some Deligne cohomology group. In our context the main lemma is  
\begin{lemm} \label{ma_is}
For $n\ge 0$ consider elements  
\begin{equation*}
\varepsilon_0,\ldots,\varepsilon_n\in H_{\cal D}^1(X,\mathbb{R}(1))=
\left\{\varepsilon\in {\cal A}^0(X,\mathbb{R})\left| 
\begin{array}{l}
d\varepsilon=\pi_0(\omega), \\
\omega\in\Omega^1_D(\overline{X})
\end{array}\right\}\right.
\end{equation*}
Define a smooth $\mathbb{R}(n)$-valued $n$-form on $X$ by:
\begin{align*}
C_{n+1} &= C_{n+1}(\varepsilon_0,\ldots,\varepsilon_n) \\
 &= 2^n \sum\limits_{i=0}^n (-1)^i \frac{1}{(n+1)!} 
\sum\limits_{\sigma\in {\frak S}_{n+1}}\sgn(\sigma)
\varepsilon_{\sigma_0} \bar{\partial}\varepsilon_{\sigma_1}\dots 
\bar{\partial}\varepsilon_{\sigma_i} \partial\varepsilon_{\sigma_{i+1}}\ldots
\partial\varepsilon_{\sigma_n}.  
\end{align*}  
where ${\frak S}_{n+1}$ is the permutation group of $\{0,\ldots,n\}$.
Then 
\begin{equation*} 
dC_{n+1}=\pi_n(\omega_{n+1})
\end{equation*}
where $\omega_{n+1}=2^{n+1}
\partial\varepsilon_0\wedge\ldots\wedge\partial\varepsilon_n \in
\Omega^{n+1}_D(\overline{X})$ and 
\begin{equation*} 
[C_{n+1}(\varepsilon_0,\ldots,\varepsilon_n)]=
[\varepsilon_0]\cup\ldots\cup [\varepsilon_n]\text{\ in\ }
H_{\cal D}^{n+1}(X,\mathbb{R}(n+1)). 
\end{equation*}
Moreover for all $\sigma\in {\frak S}_{n+1}$
\begin{equation*}
C_{n+1}(\varepsilon_{\sigma_0},\ldots,\varepsilon_{\sigma_n})=\sgn(\sigma)
C_{n+1}(\varepsilon_0,\ldots,\varepsilon_n).
\end{equation*} 
\end{lemm}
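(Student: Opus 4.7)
My plan is to treat the three assertions in the order stated, deferring the cohomological identification to the end since I expect it to be the deepest step.

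The antisymmetry is essentially formal: for any $\tau\in {\frak S}_{n+1}$, the substitution $\sigma\mapsto\sigma\circ\tau^{-1}$ in the inner sum permutes the indices $\sigma_j$ by $\tau$, while $\sgn(\sigma\circ\tau^{-1})=\sgn(\tau)\sgn(\sigma)$ produces exactly the required overall factor $\sgn(\tau)$. For the differential identity $dC_{n+1}=\pi_n(\omega_{n+1})$ I would proceed by direct computation, using the key preliminary observation that $d\varepsilon_i=\pi_0(\omega_i)=\operatorname{Re}(\omega_i)$ with $\omega_i$ holomorphic, so that $\partial\varepsilon_i=\tfrac12\omega_i$ and $\bar\partial\varepsilon_i=\tfrac12\bar\omega_i$; in particular $\partial\bar\partial\varepsilon_i=0$, and hence $d(\partial\varepsilon_j)=d(\bar\partial\varepsilon_j)=0$ for every $j$. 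The Leibniz rule then shows that the only surviving contribution when applying $d$ to the $i$-th summand comes from differentiating the leading factor $\varepsilon_{\sigma_0}$. Splitting $d\varepsilon_{\sigma_0}=\partial\varepsilon_{\sigma_0}+\bar\partial\varepsilon_{\sigma_0}$, I denote the corresponding two contributions to the $i$-th inner sum by $V_i$ and $U_i$. The crucial cancellation is $U_i=V_{i+1}$ for $0\le i\le n-1$: rewriting $V_{i+1}$ by moving the leading $\partial\varepsilon_{\sigma_0}$ past the $i+1$ subsequent $\bar\partial$-factors introduces a sign $(-1)^{i+1}$, and the reindexing $\sigma\mapsto\sigma\circ c$, with $c$ the cyclic permutation $(0,1,\dots,i+1)$ of sign $(-1)^{i+1}$, introduces a compensating $(-1)^{i+1}$ via $\sgn(\sigma\circ c)$. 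Hence the alternating sum $\sum_i(-1)^i(U_i+V_i)$ telescopes, leaving only $V_0+(-1)^nU_n$. Each of these is $(n+1)!$ signed copies of $\partial\varepsilon_0\wedge\dots\wedge\partial\varepsilon_n$ (resp. $\bar\partial\varepsilon_0\wedge\dots\wedge\bar\partial\varepsilon_n$), since the $\sgn(\sigma)$ cancels against the sign produced by reordering the wedge. Multiplying by the prefactor $2^n/(n+1)!$ I recover $2^n(\partial\varepsilon_0\wedge\dots\wedge\partial\varepsilon_n+(-1)^n\bar\partial\varepsilon_0\wedge\dots\wedge\bar\partial\varepsilon_n)=\pi_n(\omega_{n+1})$, as required.

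The cohomology class identification $[C_{n+1}]=[\varepsilon_0]\cup\dots\cup[\varepsilon_n]$ is the main obstacle. I would argue by induction on $n$: the case $n=0$ is tautological, and for the inductive step one invokes an explicit cocycle representative of the Beilinson--Deligne cup product $H^1_{\cal D}(X,\mathbb{R}(1))\otimes H^n_{\cal D}(X,\mathbb{R}(n))\to H^{n+1}_{\cal D}(X,\mathbb{R}(n+1))$. Feeding in the representative $C_n(\varepsilon_1,\dots,\varepsilon_n)$ supplied by the induction hypothesis for the inner product, one produces a specific cocycle for $[\varepsilon_0]\cup([\varepsilon_1]\cup\dots\cup[\varepsilon_n])$; the goal is to show it differs from $C_{n+1}(\varepsilon_0,\dots,\varepsilon_n)$ by a form in the image of the Deligne differential. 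The antisymmetry established in the first step then guarantees that the result is independent of the chosen parenthesization of the iterated cup product. The delicate aspect is the book-keeping: several conventions for the Beilinson cup product on $\mathbb{R}(*)_{\cal D}$ coexist in the literature, differing by signs and powers of two, and one must pick the one compatible with the normalization of $H^1_{\cal D}(X,\mathbb{R}(1))$ used in the statement so that the coboundary corrections actually cancel out to give the desired equality of classes.
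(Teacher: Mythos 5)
The paper does not actually prove this lemma --- its ``proof'' consists of the citations \cite{beil86}, 2.2 and \cite{deni89}, Lemma (7.2) --- so your proposal has to be judged on its own merits rather than against an in-paper argument. Your first two steps are correct and essentially complete. The antisymmetry is indeed the formal substitution argument you describe. For the differential identity, the telescoping mechanism $U_i=V_{i+1}$ is exactly right: moving $\partial\varepsilon_{\sigma_0}$ past $i+1$ one-forms costs $(-1)^{i+1}$, the cyclic reindexing of length $i+2$ restores it, and the alternating sum collapses to $V_0+(-1)^nU_n=(n+1)!\bigl(\partial\varepsilon_0\wedge\dots\wedge\partial\varepsilon_n+(-1)^n\bar\partial\varepsilon_0\wedge\dots\wedge\bar\partial\varepsilon_n\bigr)$, which after multiplying by $2^n/(n+1)!$ is $\pi_n(\omega_{n+1})$ since the $\varepsilon_i$ are real. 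One small point: ``$\omega_i$ holomorphic'' does not by itself give $d(\partial\varepsilon_i)=0$ when $\dim X>1$, since a logarithmic $1$-form need not be closed; you also need $d\omega_i=0$, which follows from $0=d^2\varepsilon_i=\pi_0(d\omega_i)$ together with the type decomposition of $d\omega_i$. This is a one-line fix, not a real gap.

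The genuine gap is in the third assertion, which is the actual content of the lemma. You correctly identify the strategy (induction on $n$ via an explicit cochain-level representative of the Beilinson--Deligne product), but you never write down that product formula, and everything hinges on it: the statement that the explicit cocycle for $[\varepsilon_0]\cup[C_n(\varepsilon_1,\dots,\varepsilon_n)]$ ``differs from $C_{n+1}$ by a form in the image of the Deligne differential'' is asserted, not verified, and your closing remark that one must choose the convention ``so that the coboundary corrections actually cancel out'' concedes exactly the computation that is missing. Concretely, one needs the multiplication on the complexes computing $H^{\ast}_{\cal D}(X,\mathbb{R}(\ast))$ (in the model where a class in $H^{n+1}_{\cal D}(X,\mathbb{R}(n+1))$ is an $\mathbb{R}(n)$-valued $n$-form $C$ with $dC=\pi_n(\omega)$), then an explicit primitive exhibiting $\varepsilon_0\cup C_n(\varepsilon_1,\dots,\varepsilon_n)-C_{n+1}(\varepsilon_0,\dots,\varepsilon_n)$ as a coboundary; this is precisely the computation carried out in \cite{deni89}, Lemma (7.2) (following \cite{beil86}, 2.2). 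Until that step is supplied, the class identity --- and hence the lemma --- is not proved. Also note that antisymmetry only gives independence of parenthesization once the class identity is known for one parenthesization; it cannot substitute for the inductive step itself.
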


\begin{proof}
See \cite{beil86} 2.2 and \cite{deni89} Lemma (7.2).
\end{proof}

Before we can proceed we have to fix some notations. 
Let $0\neq P(t_1,t_2)\in \mathbb{C}[t_1,t_2]$,
\begin{equation} \label{s_let}
P(t_1,t_2)=\sum\limits_{i=0}^n a_i(t_1)t_2^i
\end{equation} 
be irreducible with $a_n\not\equiv 0$. Set 
$i_0:=\min\{i\mid a_i\not\equiv 0\}$ and let $P^*(t_1)$ denote the polynomial 
$a_{i_0}(t_1)$. Assume that $P^*(t_1)=P(t_1,0)$ and that $P^*$ does not vanish
on $S^1$. Let $Z^*(P):=Z(P)\cap (\mathbb{C}^*)^2$. Denote by $A$ the union
of the connected components of dimension $1$ of 
$(S^1\times B)\cap Z^*(P)=(S^1\times B)\cap Z(P)$. Furthermore let 
$A\subset Z^*(P)^{\text{reg}}$. \\
As was remarked by Deninger (see \cite{deni97}) and others using 
Jensen's formula one has 
\begin{equation*}
m(P^*)-m(P)=\int\limits_{S^1} \eta
\end{equation*}   
with the integrable $1$-form on $S^1$ 
\begin{equation*} 
\eta:=\frac{1}{2\pi i}
\sum_{\substack{0\neq b\in\stackrel{\circ}{B} \\
P(t_1,b)=0}} \log|b| \frac{dt_1}{t_1}.
\end{equation*} 
The sum has to be taken with multiplicities of the zeroes 
$0\neq b\in \stackrel{\circ}{B}$ of $P_{t_1}(t):=P(t_1,t)$. The form 
$\eta$ is well defined since $P_{t_1}(t)$ cannot vanish identically due to 
the irreducibility of $P$. \\
Proceeding in the line of \cite{dekn82} Thm.~5.1 we now
``triangulate'' the compact, semi-algebraic set $A$. Set
\begin{align*}
e:[0,1] &\rightarrow S^1 \\
\varphi  &\mapsto e^{2\pi i \varphi}.
\end{align*}
Using implicit functions one can subdivide the interval $I:=[0,1]$ 
into disjoint subintervals $I_k:=[\tau_k,\tau_{k+1}]$ for
$k=0,\ldots,s-1$ and define algebraic germs 
$F_{1k},\ldots,F_{nk}$ of $P$ in a neighbourhood of the arc 
$e(\stackrel{\circ}{I}_k)$ which can be continously extended to the
boundary $\partial e(I_k)$. Therefore we have paths 
\begin{align*}
\gamma_{ik}: I_k &\rightarrow 
S^1\times \mathbb{P}^1(\mathbb{C}) \\
 \varphi &\mapsto (e(\varphi),F_{ik}(e(\varphi)))  
\end{align*}  
which by eventually taking a finer subdivision have the following properties
\begin{enumerate}
\item For a path $\gamma_{ik}$ one and only one of the following
conditions holds: 
\begin{enumerate} 
\item $\gamma_{ik}(\stackrel{\circ}{I_k})\subset 
S^1\times\stackrel{\circ}{B}$. 
\item $\gamma_{ik}(\stackrel{\circ}{I_k})\subset T^2$. 
\item $\gamma_{ik}(\stackrel{\circ}{I_k})\subset (S^1\times B)^c$. 
\end{enumerate}  
\item One has  
\begin{equation*}
A=\bigcup_{\substack{i,k \\ 
\gamma_{ik}(I_k)\subset S^1\times B}}
\gamma_{ik}(I_k).   
\end{equation*}
\item If two paths $\gamma_{ik}$ and $\gamma_{i'k'}$ intersect their 
intersection is contained in 
$\gamma_{ik}(\partial I_k)\cap \gamma_{i'k'}(\partial I_{k'})$. 
\end{enumerate}
Using this construction we have 
\begin{lemm}
Let $P$ satisfy the general assumptions made at the beginning of this 
chapter. Let  
$C_2=C_2(\log|t_1|,\log|t_2|)$ denote the differential form of 
\ref{ma_is}. Then the restriction of $C_2$ to $Z^*(P)^{\text{reg}}$ 
is defined and we have 
\begin{equation*} 
(-2\pi i)\int\limits_{S^1} \eta=
\sum\limits_{k=0}^{s-1} 
\sum_{\substack{i\in \{1,\ldots,n\} \\ \gamma_{ik}(I_k)\subset S^1\times B}} 
\int\limits_{I_k} \gamma_{ik}^* C_2.   
\end{equation*}    
\end{lemm}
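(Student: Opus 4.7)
The plan is to compute both sides explicitly and match them. The key step is an explicit pullback calculation of $C_2(\log|t_1|,\log|t_2|)$ along each path $\gamma_{ik}$, which reduces the Deligne-cohomological cup product to a one-variable integral.

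First I would write out $C_2$ explicitly from the $n=1$ case of Lemma \ref{ma_is}. Only two values of $i$ and two permutations occur, giving
\begin{equation*}
C_2(\varepsilon_0,\varepsilon_1) = \varepsilon_0\,(\partial-\bar{\partial})\varepsilon_1 \;-\; \varepsilon_1\,(\partial-\bar{\partial})\varepsilon_0.
\end{equation*}
Since $Z^*(P)\subset(\mathbb{C}^*)^2$, the functions $\log|t_j|$ are smooth on $Z^*(P)^{\text{reg}}$, and on this one-dimensional complex manifold one has the standard identities $\partial\log|t_j|=\tfrac{1}{2}\,dt_j/t_j$ and $\bar{\partial}\log|t_j|=\tfrac{1}{2}\,d\bar t_j/\bar t_j$. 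In particular $C_2$ restricts to a smooth real $1$-form on $Z^*(P)^{\text{reg}}$.

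Next I would pull $C_2$ back along $\gamma_{ik}(\varphi)=(e^{2\pi i\varphi},F_{ik}(e^{2\pi i\varphi}))$. The decisive simplification is that $|t_1\circ\gamma_{ik}|\equiv 1$, so $\log|t_1|$ vanishes identically on the image of $\gamma_{ik}$ and the first summand of $C_2$ drops out. Using a continuous branch of the logarithm on $I_k$, one computes
\begin{equation*}
\gamma_{ik}^*\bigl(\partial-\bar{\partial}\bigr)\log|t_1| \;=\; \tfrac{1}{2}\,\gamma_{ik}^*\!\left(\frac{dt_1}{t_1}-\frac{d\bar t_1}{\bar t_1}\right) \;=\; 2\pi i\,d\varphi,
\end{equation*}
while $\gamma_{ik}^*\log|t_2|=\log|F_{ik}(e^{2\pi i\varphi})|$. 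Combining these gives
\begin{equation*}
\gamma_{ik}^*C_2 \;=\; -2\pi i\,\log|F_{ik}(e^{2\pi i\varphi})|\,d\varphi.
\end{equation*}

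To finish I would sum over $k$ and over those $i$ with $\gamma_{ik}(I_k)\subset S^1\times B$ and compare with the definition of $\eta$. Paths of type (b), lying on $T^2$, satisfy $|F_{ik}|\equiv 1$ and contribute $0$; paths of type (c) are excluded on both sides. For paths of type (a), the hypothesis that $P^*$ does not vanish on $S^1$ implies $F_{ik}\neq 0$ for $t_1\in S^1$, so at each $\varphi\in\stackrel{\circ}{I}_k$ the values $F_{ik}(e^{2\pi i\varphi})$ of the retained branches are precisely the nonzero zeros of $P_{t_1}$ lying in $\stackrel{\circ}{B}$, counted with multiplicity. Parameterizing $S^1$ by $\varphi$ so that $dt_1/t_1=2\pi i\,d\varphi$ turns $(-2\pi i)\int_{S^1}\eta$ into exactly the sum of the integrals $\int_{I_k}\gamma_{ik}^*C_2$, which is the claim.

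The main obstacle I expect lies in the boundary bookkeeping: one must verify that the transitions between cases (a)--(c) at the points $\tau_k$, and the finitely many $\varphi$ where some $|F_{ik}(e^{2\pi i\varphi})|=1$ or where two branches collide, do not affect either side. This is where properties (1)--(3) of the triangulation are essential: (1) makes the three cases mutually exclusive on each open interval, (2) guarantees that every point of $A$ is covered by paths of type (a) or (b), and (3) confines overlaps to a finite endpoint set, which has measure zero and so contributes nothing to the integrals.
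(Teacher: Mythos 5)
Your proposal is correct and follows essentially the same route as the paper: write out $C_2$ explicitly for $n=1$, use $\partial\log|t_j|=\tfrac12\,dt_j/t_j$ and the vanishing of $\log|t_1|$ on $S^1$ to get $\gamma_{ik}^*C_2=(-2\pi i)\log|F_{ik}(e(\varphi))|\,d\varphi$, and then match the sum over retained branches with the sum over zeros $b\in\stackrel{\circ}{B}$ defining $\eta$. Your extra bookkeeping about the cases (a)--(c) and the measure-zero endpoint set is implicit in the paper's appeal to the triangulation but is a welcome clarification.
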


\begin{proof}
Since $Z(P)\cap T^2$ doesn't contribute to the integral we have   
\begin{equation*}
(-2\pi i)\int\limits_{S^1} \eta = 
(-2\pi i)\int\limits_0^1 \sum_{\substack{0\neq b\in B \\
P(e(\varphi),b)=0}} \log|b| d\varphi.
\end{equation*}
The above construction gives us   
\begin{align*}
\int\limits_0^1 \sum_{\substack{0\neq b\in B \\
P(e(\varphi),b)=0}} \log|b| d\varphi &= 
\sum\limits_{k=0}^{s-1} \int\limits_{I_k} \sum_{\substack{0\neq b\in B \\
P(e(\varphi),b)=0}} \log|b| d\varphi \\
 &= \sum\limits_{k=0}^{s-1}  
\sum_{\substack{i\in \{1,\ldots,n\} \\ \gamma_{ik}(I_k)\subset S^1\times B}} 
 \int\limits_{I_k} \log|F_{ik}(e(\varphi))| d\varphi.
\end{align*}
We now have to show that 
\begin{equation*}
\int\limits_{I_k} \gamma_{ik}^* C_2=(-2\pi i)
\int\limits_{I_k} \log|F_{ik}(e(\varphi))| d\varphi. 
\end{equation*}
Using $\partial (\log|t_i|)=\frac{1}{2}\frac{d t_i}{t_i}$ 
we get  
\begin{equation*}
C_2(\log|t_1|,\log|t_2|)=\frac{1}{2}\left(
\log|t_1|\frac{d t_2}{t_2}-\log|t_2|\frac{d t_1}{t_1} 
-\log|t_1|\frac{d \bar{t}_2}{\bar{t}_2}
+\log|t_2|\frac{d \bar{t}_1}{\bar{t}_1} \right).
\end{equation*}
According to the definition we have  
\begin{equation*}
\gamma_{ik}(\varphi)=(e(\varphi),F_{ik}(e(\varphi))). 
\end{equation*} 
Computing $\gamma_{ik}^* C_2$ one sees immediately (notice that 
$\log|e(\varphi)|=0$) 
\begin{align*}
\gamma_{ik}^* C_2 &=\frac{1}{2} \left(-\log|F_{ik}(e(\varphi))| 
e(-\varphi) (2\pi i) e(\varphi) d\varphi \right. \\
 & \quad + \left. \log|F_{ik}(e(\varphi))| 
e(\varphi)(-2\pi i) e(-\varphi)d\varphi\right) \\
 &=(-2\pi i) \log|F_{ik}(e(\varphi))|d\varphi.   
\end{align*} 
\end{proof}

\begin{koro}
Using the above notation we get 
\begin{equation*}  
m(P)-m(P^*)=\frac{1}{2\pi i} 
\sum\limits_{k=0}^{s-1} 
\sum_{\substack{i\in \{1,\ldots,n\} \\ \gamma_{ik}(I_k)\subset S^1\times B}} 
\int\limits_{I_k} \gamma_{ik}^* C_2.  
\end{equation*}
\end{koro}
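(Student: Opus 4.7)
The plan is straightforward: the corollary is essentially a mechanical combination of the previous lemma with the Jensen-formula identity that was already established at the start of the chapter. No new analytic or geometric input is needed.

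First, I would recall the identity
\begin{equation*}
m(P^*)-m(P)=\int_{S^1}\eta,
\end{equation*}
which was derived from Jensen's formula earlier in the chapter and expresses the difference of Mahler measures purely in terms of the integrable $1$-form $\eta$ built out of the non-unit roots of $P(t_1,\cdot)$. Next, I would invoke the preceding lemma, which converts that integral into a sum of pullback integrals of the Deligne cocycle $C_2=C_2(\log|t_1|,\log|t_2|)$:
\begin{equation*}
(-2\pi i)\int_{S^1}\eta=\sum_{k=0}^{s-1}\sum_{\substack{i\in\{1,\ldots,n\}\\ \gamma_{ik}(I_k)\subset S^1\times B}}\int_{I_k}\gamma_{ik}^*C_2.
\end{equation*}

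Dividing this identity by $-2\pi i$ and substituting into the Jensen relation I obtain
\begin{equation*}
m(P)-m(P^*)=-\bigl(m(P^*)-m(P)\bigr)=-\int_{S^1}\eta=\frac{1}{2\pi i}\sum_{k=0}^{s-1}\sum_{\substack{i\in\{1,\ldots,n\}\\ \gamma_{ik}(I_k)\subset S^1\times B}}\int_{I_k}\gamma_{ik}^*C_2,
\end{equation*}
which is exactly the claim. There is no genuine obstacle; the only things to verify are the sign of $-1/(2\pi i)$ and the fact that the same index set $\{(i,k):\gamma_{ik}(I_k)\subset S^1\times B\}$ appears on both sides, which is immediate from the triangulation of $A$ set up before the preceding lemma.
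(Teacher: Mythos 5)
Your proposal is correct and is exactly what the paper intends: the paper's own proof is simply the word ``Obvious,'' meaning precisely the combination of the Jensen-formula identity $m(P^*)-m(P)=\int_{S^1}\eta$ with the preceding lemma, followed by the sign bookkeeping you carry out. Nothing further is needed.
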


\begin{proof}
Obvious.
\end{proof}

Let us now fix some notations.
Let $K=\mathbb{C}$ or $\mathbb{R}$. For a variety $X$ over
$K=\mathbb{R}$ we get an antiholomorphic involution 
$F_{\infty}$ on $X(\mathbb{C})$. For a complex $\mathbb{C}$-valued form
$\eta$ on $X(\mathbb{C})$ set 
$\overline{F}_{\infty}^*\eta=\overline{F_{\infty}^*\eta}$.   \\
For any  variety $X/K$ and any subgroup $\Lambda\subset\mathbb{C}$
which in case $K=\mathbb{R}$ should in addition satisfy 
$\overline{\Lambda}=\Lambda$ we set  
\begin{align*}
H^n(X/\mathbb{C},\Lambda) &:=H^n_{\text{sing}}(X(\mathbb{C}),\Lambda) && 
\text{and} \\
H^n(X/\mathbb{R},\Lambda) &:=H^n_{\text{sing}}(X(\mathbb{C}),\Lambda)^+
\end{align*} 
where the superscript $+$ denotes taking invariants under the action
of $\overline{F}_{\infty}^*$. A similar definition applies to homology
and to relative situations. \\
Set $\Lambda(n):=(2\pi i)^n \Lambda$. We also need the natural pairing 
\begin{equation*}
\langle .,.\rangle:H^n(X/K,\mathbb{R}(n)) \times 
H_n(X/K,\mathbb{R}(-n)) \rightarrow \mathbb{R}
\end{equation*}
and again similar for relative situations. As a last ingredient we want to 
mention the fact that for $n>\dim X$ we have the equation  
$H_{\cal D}^{i+1}(X/K,\mathbb{R}(n))=
H^i(X/K,\mathbb{R}(n-1))$. \\
Let us now return to our main discussion. 
Connecting the paths $\gamma_{ik}$ in an appropiate way using each
path just one time and
reparametrizing the resulting path we get closed
paths $\chi_{\mu}:[0,1]\rightarrow Z^*(P)^{\text{reg}}$ 
($\mu=0,\ldots,\mu_0$) and paths with
boundary  $\psi_{\nu}:[0,1]\rightarrow Z^*(P)^{\text{reg}}$ 
($\nu=0,\ldots,\nu_0$) satisfying the following properties:  
\begin{enumerate}
\item The boundary points of the $\psi_{\nu}$ are exactly those points
where the number of paths 
$\gamma_{ik}$ running into the point is not equal to the number of paths 
$\gamma_{ik}$ running out of that point. Denote the set of all boundary
points of paths $\psi_{\nu}$ by $R_P$. One has $R_P\subset T^2$. 
\item We have 
\begin{equation*}
A=\bigcup\limits_{\mu=0}^{\mu_0} \chi_{\mu}([0,1]) \cup 
\bigcup\limits_{\nu=0}^{\nu_0} \psi_{\nu}([0,1]).
\end{equation*}   
\end{enumerate}
The paths $\chi_{\mu}$and $\psi_{\nu}$ give us classes 
$[\chi_{\mu}] \in H_1(Z^*(P)^{\text{reg}}/\mathbb{C},\mathbb{Z})$ and 
\linebreak
$[\psi_{\nu}] \in H_1((Z^*(P)^{\text{reg}},R_P)/\mathbb{C};\mathbb{Z})$. 
Considering the $[\chi_{\mu}]$ also as elements of 
$H_1((Z^*(P)^{\text{reg}},R_P)/\mathbb{C};\mathbb{Z})$ we set 
\begin{equation*}
[A]:=\sum\limits_{\mu=0}^{\mu_0} [\chi_{\mu}]+ 
\sum\limits_{\nu=0}^{\nu_0} [\psi_{\nu}].
\end{equation*}  
Now note that the restriction of the $1$-form $C_2$ to 
$Z^*(P)^{\text{reg}}$ is closed therefore defining a cohomology class 
$[C_2]\in H^1(Z^*(P)^{\text{reg}},\mathbb{R}(1))$. 
Since the restriction of $C_2$ to $R_P\subset T^2$ is zero we may
also view it as defining a relative cohomology class 
$[C_2]\in H^1((Z^*(P)^{\text{reg}},R_P)/\mathbb{C};\mathbb{R}(1))$. \\
Using de Rham theorem it is not hard to show the following claim:

\begin{satz}  \label{following}
Let $P$ be as above. There is a class  
\begin{equation*}
[A]\otimes (2\pi i)^{-1}\in 
H_1((Z^*(P)^{\text{reg}},R_P)/\mathbb{C};\mathbb{Z}(-1)),
\end{equation*}
satisfying  
\begin{equation} \label{cohomform}
m(P)-m(P^*)=\langle [C_2],[A]\otimes (2\pi i)^{-1}\rangle.
\end{equation}
\end{satz}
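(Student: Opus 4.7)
The plan is to combine the preceding Corollary, which expresses $m(P)-m(P^*)$ as a sum of integrals of $\gamma_{ik}^{*}C_2$ over the $I_k$, with the de Rham isomorphism for relative (co)homology. Everything reduces to showing that those integrals add up to the integral of $C_2$ over the relative chain $[A]$ built from the paths $\chi_\mu$ and $\psi_\nu$.

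First, I would verify that $[A]=\sum_\mu[\chi_\mu]+\sum_\nu[\psi_\nu]$ is a well-defined relative $1$-cycle. The loops $\chi_\mu$ are closed, and by property~1 in the list defining $R_P$ the boundary of each $\psi_\nu$ lies in $R_P$; hence $[A]$ represents a class in $H_1((Z^*(P)^{\text{reg}},R_P)/\mathbb{C};\mathbb{Z})$. Independence of the auxiliary choices (the subdivision $\{I_k\}$ and the connecting recipe) follows because two admissible constructions differ only by subdivisions of arcs and by reordering, operations that do not change the class in relative singular homology. Tensoring with $(2\pi i)^{-1}\in\mathbb{Z}(-1)\subset\mathbb{C}$ then produces the desired class in $H_1((Z^*(P)^{\text{reg}},R_P)/\mathbb{C};\mathbb{Z}(-1))$.

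Next, I would evaluate the pairing. Since the explicit formula for $C_2$ displayed above contains only terms involving $\log|t_i|$, and $\log|t_i|=0$ on $T^2\supset R_P$, the form $C_2$ vanishes on $R_P$ and so represents a well-defined relative de Rham class. By the de Rham theorem the pairing with a relative $1$-cycle $c$ is given by integration, and the coefficient $(2\pi i)^{-1}\in\mathbb{R}(-1)$ pairs with the $\mathbb{R}(1)$-valued form $C_2$ to give a real number. Applied to $c=[A]$ this yields
\begin{equation*}
\langle[C_2],[A]\otimes(2\pi i)^{-1}\rangle=\frac{1}{2\pi i}\left(\sum_\mu\int_{\chi_\mu}C_2+\sum_\nu\int_{\psi_\nu}C_2\right).
\end{equation*}

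Finally, by property~2 of the $\chi_\mu,\psi_\nu$ together with the fact that the connecting procedure uses every $\gamma_{ik}$ at most once, each path $\gamma_{ik}$ with $\gamma_{ik}(I_k)\subset S^1\times B$ enters exactly one of the $\chi_\mu$ or $\psi_\nu$ as a subarc, while those with image in $T^2$ or $(S^1\times B)^c$ are absent from $[A]$. Additivity of the integral under concatenation and reparametrization then gives
\begin{equation*}
\sum_\mu\int_{\chi_\mu}C_2+\sum_\nu\int_{\psi_\nu}C_2=\sum_{k=0}^{s-1}\;\sum_{\substack{i\in\{1,\ldots,n\}\\ \gamma_{ik}(I_k)\subset S^1\times B}}\int_{I_k}\gamma_{ik}^{*}C_2,
\end{equation*}
and comparing with the Corollary finishes the argument. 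The principal obstacle is the first step: one has to check that the connecting procedure can be organized so that every unmatched endpoint really lies in $R_P$ and that the resulting class is independent of the chosen bookkeeping. This is essentially an Euler-type counting argument on the oriented graph whose edges are the $\gamma_{ik}$ and whose vertices are the lifted partition points $(e(\tau_k),F_{ik}(e(\tau_k)))$.
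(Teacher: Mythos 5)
Your argument is correct and follows exactly the route the paper intends: the paper dispatches this theorem with the single remark that it follows from the de Rham theorem, and your proposal fleshes out precisely that — $[A]$ is a relative cycle because the $\chi_\mu$ are closed and $\partial\psi_\nu\subset R_P$ by construction, $C_2$ vanishes on $R_P\subset T^2$ so gives a relative class, and the pairing computed by integration reassembles the sum from the preceding Corollary. The only quibble is your remark that arcs lying in $T^2$ are ``absent from $[A]$'' (they may belong to $A$ since $T^2\subset S^1\times B$), but this is harmless because $C_2$ restricts to zero there.
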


\begin{beme}
For polynomials $P\in\mathbb{C}[t_1,\ldots,t_n]$ such that $Z^*(P)$ is smooth 
and does not intersect $T^n$ a cohomological generalization of formula 
(\ref{cohomform}) was given in \cite{den_bes99} Proposition 2.2.
\end{beme}

\begin{koro} \label{claim}
Let $P\in \mathbb{Q}[t_1,t_2]$ be as above and assume  
$R_P=\emptyset$. Then we have  
\begin{equation*} 
[A]\otimes (2\pi i)^{-1}\in 
H_1(Z^*(P)^{\text{reg}}/\mathbb{R},\mathbb{Z}(-1)) 
\end{equation*} 
and
\begin{equation*} 
m(P)-m(P^*)=\langle r_{\cal D}(\{t_1,t_2\})
,[A]\otimes (2\pi i)^{-1}\rangle,
\end{equation*}
where $\{t_1,t_2\}\in H_{\cal M}^2(Z^*(P)^{\text{reg}},\mathbb{Q}(2))$
and  
\begin{equation*}
r_{\cal D}:H_{\cal M}^2(Z^*(P)^{\text{reg}},\mathbb{Q}(2)) \rightarrow 
H_{\cal D}^2(Z^*(P)^{\text{reg}},\mathbb{R}(2))
\end{equation*}
denotes as usual the regulator.  
\end{koro}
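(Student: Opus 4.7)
The plan is to deduce this corollary directly from Theorem \ref{following} by translating the two additional hypotheses---namely $R_P = \emptyset$ and $P \in \mathbb{Q}[t_1, t_2]$---into, respectively, the descent of the cycle class to the $\mathbb{R}$-structures and the identification of the form $C_2$ with the image under the Deligne regulator of the Milnor symbol $\{t_1,t_2\}$.

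First, since $R_P = \emptyset$, the relative homology group appearing in Theorem \ref{following} coincides with the absolute group $H_1(Z^*(P)^{\text{reg}}/\mathbb{C}, \mathbb{Z}(-1))$, and the formula $m(P) - m(P^*) = \langle [C_2], [A] \otimes (2\pi i)^{-1}\rangle$ from (\ref{cohomform}) holds there unchanged.

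Next, I would verify $\overline{F}_\infty^*$-invariance. Since $P$ has rational (hence real) coefficients, the antiholomorphic involution $F_\infty(z_1,z_2) = (\bar z_1, \bar z_2)$ stabilizes both $Z^*(P)^{\text{reg}}$ and the domain $S^1 \times B$. A direct check on the parametrizations gives $F_\infty(\gamma_{ik}(\varphi)) = (e(-\varphi), \overline{F_{ik}(e(\varphi))})$, and the second coordinate, being a root of the real polynomial $P_{e(-\varphi)}$, must coincide with some branch $F_{i'k'}$; thus the resulting path traces one of the $\gamma_{i'k'}$ but with reversed orientation in $\varphi$. Summing over the constituent arcs yields $F_{\infty,*}[A] = -[A]$. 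Combined with the fact that $\overline{F}_\infty^*$ acts as multiplication by $-1$ on the Tate twist $\mathbb{Z}(-1) = (2\pi i)^{-1}\mathbb{Z}$, the two signs cancel and $[A] \otimes (2\pi i)^{-1}$ is $\overline{F}_\infty^*$-invariant, proving the first assertion.

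For the second assertion I would identify $[C_2]$ with $r_{\cal D}(\{t_1,t_2\})$. The Beilinson regulator sends a unit $f \in \mathcal{O}^*(X)$ to $[\log|f|] \in H_{\cal D}^1(X,\mathbb{R}(1))$ and is compatible with cup products in motivic and Deligne cohomology, so $r_{\cal D}(\{t_1,t_2\}) = [\log|t_1|] \cup [\log|t_2|]$. Specializing Lemma \ref{ma_is} to $\varepsilon_0 = \log|t_1|$, $\varepsilon_1 = \log|t_2|$ represents this cup product precisely by $C_2(\log|t_1|,\log|t_2|)$. Via the identification $H_{\cal D}^2(X/\mathbb{R},\mathbb{R}(2)) = H^1(X/\mathbb{R},\mathbb{R}(1))$ valid since $X = Z^*(P)^{\text{reg}}$ has dimension $1$, the pairing appearing in (\ref{cohomform}) becomes the one in the statement, completing the proof. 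The main obstacle is the careful bookkeeping of signs and normalizations: verifying that the orientation reversal of $[A]$ under $F_\infty$ exactly cancels the sign from the Tate twist, and that the explicit formula for $C_2$ in Lemma \ref{ma_is} agrees with the standard normalization of the Beilinson regulator on $\{t_1,t_2\}$, so that no stray factor creeps into the pairing.
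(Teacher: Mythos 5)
Your proof is correct and follows essentially the same route as the paper: the formula is obtained from Theorem \ref{following} by identifying $[C_2]=[\log|t_1|]\cup[\log|t_2|]=r_{\cal D}(t_1)\cup r_{\cal D}(t_2)=r_{\cal D}(\{t_1,t_2\})$ via Lemma \ref{ma_is} and the compatibility of the regulator with cup products. The paper's proof consists of exactly this calculation and silently omits the verification that $F_{\infty,*}[A]=-[A]$, so that $[A]\otimes(2\pi i)^{-1}$ descends to $H_1(Z^*(P)^{\text{reg}}/\mathbb{R},\mathbb{Z}(-1))$, a point which you carry out explicitly and correctly.
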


\begin{proof} 
We calculate 
\begin{align*}
[C_2] &=[\log|t_1|]\cup [\log|t_2|]  && \text{see \ref{ma_is}} \\
 &=r_{\cal D}(t_1) \cup r_{\cal D}(t_2) \\ 
 &=r_{\cal D}(\{t_1,t_2\})
\end{align*} 
using the compatibility of the regulator with respect to cup products and the
fact that $r_{\cal D}(t_i)=\log|t_i|$.  
\end{proof}

Our assumptions imposed on the polynomial $P$ at the beginning of the
chapter are very restrictive. The following lemma allows us to weaken
those conditions. But before doing so we need another notation. 
Let $A=\bigl(\begin{smallmatrix}a&b\\c&d\end{smallmatrix}\bigr) 
\in \text{GL}_2(\mathbb{Z})$ and define
\begin{align*}
\phi_A : (\mathbb{C}^*)^2 &\rightarrow (\mathbb{C}^*)^2 \\
  (t_1,t_2) & \mapsto (t_1^a t_2^c,t_1^b t_2^d).
\end{align*} 
 
\begin{lemm} \label{fine}
Let $0 \neq P(t_1,t_2)\in \mathbb{C}[t_1,t_2]$ written as in
(\ref{s_let}) and assume $P^*(t_1)=a_0(t_1)$. Let   
$Z^*(P)^{\text{sing}}=\{(z_1^{(i)},z_2^{(i)})\mid i=1,\ldots,r\}$ be
the finite set of singularities of $Z^*(P)$.  Assume 
$Z^*(P)^{\text{sing}}\cap T^2=\emptyset$. 
Then there exists an 
$A\in \text{GL}_2(\mathbb{Z})$ so that 
$Q(t_1,t_2):=(t_1 t_2)^{\deg(a_0)}\cdot (\phi_A^*P)(t_1,t_2)$
satisfies the following conditions: 
\begin{enumerate}
\item $m(Q)=m(P)$.
\item $Q(t_1,t_2)\in \mathbb{C}[t_1,t_2]$.
\item $Q^*(t_1)=Q(t_1,0)$ and $Q^*(t_1)$ is equal to the leading
coefficient of $P^*$.  
\item $Z^*(Q)^{\text{sing}}\cap (S^1\times B)=
(\phi_A)^{-1} (Z^*(P)^{\text{sing}})\cap (S^1\times B)=\emptyset$.  
\item If $P$ is irreducible, so is $Q$.
\item If $P$ is reciprocal, so is $Q$.
\end{enumerate}
\end{lemm}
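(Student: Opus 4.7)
The plan is to seek $A$ in the one-parameter family
\begin{equation*}
A = \begin{pmatrix} -1 & k \\ -1 & k-1 \end{pmatrix} \in \text{SL}_2(\mathbb{Z}),
\end{equation*}
parameterised by an integer $k$, and to fix $k$ sufficiently large while avoiding a finite exceptional set. Setting $d_0 := \deg a_0$ and writing $P(s,t) = \sum c_{ij} s^i t^j$, direct substitution gives
\begin{equation*}
Q(t_1, t_2) = \sum_{(i,j)} c_{ij}\, t_1^{d_0 - i + kj}\, t_2^{d_0 - i + (k-1)j}.
\end{equation*}

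Property (1) is immediate for every $k$: $\phi_A|_{T^2}$ is a measure-preserving automorphism of the compact torus (degree $|\det A| = 1$), giving $m(\phi_A^{*} P) = m(P)$, and the monomial factor $(t_1 t_2)^{d_0}$ integrates to zero on $T^2$. For (2) and (3) I would argue by cases on $j$: terms with $j = 0$ have exponents $(d_0 - i, d_0 - i) \in \mathbb{Z}_{\geq 0}^2$ (since $i \leq d_0$), equal to $(0,0)$ only at $(i,j) = (d_0, 0)$; terms with $j \geq 1$ yield both exponents bounded below by $(k-1)j - (\deg_{t_1} P - d_0)$, hence strictly positive once $k$ exceeds an explicit bound depending only on the Newton polygon of $P$. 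This forces $Q \in \mathbb{C}[t_1, t_2]$ and $Q(t_1, 0) = c_{d_0, 0}$, the leading coefficient of $P^{*}$.

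For (4), inverting $\phi_A$ gives $\phi_A^{-1}(z_1, z_2) = (z_1^{k-1} z_2,\ z_1^{-k} z_2^{-1})$. For each singular point $(z_1^{(i)}, z_2^{(i)}) \notin T^2$, membership in $S^1 \times B$ at the preimage forces $|w_1| = 1$, i.e.\ $(k-1)\log|z_1^{(i)}| + \log|z_2^{(i)}| = 0$, a linear equation in $k$ with at most one integer solution if $\log|z_1^{(i)}| \neq 0$ and no solution at all if $\log|z_1^{(i)}| = 0$ (since then $\log|z_2^{(i)}| \neq 0$). Thus only finitely many $k$ are forbidden. For (5), $\phi_A^{*}$ is a $\mathbb{C}$-algebra automorphism of $\mathbb{C}[t_1^{\pm 1}, t_2^{\pm 1}]$ and so preserves irreducibility; the passage back to $\mathbb{C}[t_1, t_2]$ introduces no spurious factor of $t_1$ or $t_2$ because $Q(t_1, 0) = c_{d_0, 0} \neq 0$, and for $k$ large enough also $Q(0, t_2) = c_{d_0, 0}$. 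For (6), reciprocity corresponds to a $180^\circ$-symmetry of the Newton polygon together with matching coefficients; $\phi_A^{*}$ carries this symmetry to the analogous symmetry of the transformed polygon, and this is preserved further by the translation by $(d_0, d_0)$, so a direct coefficient comparison shows $Q$ is reciprocal.

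The main difficulty is the parallel bookkeeping: the lower bound on $k$ coming from (2)--(3) arises from the Newton polygon of $P$, while (4) excludes a finite arithmetic set determined by the singular locus. Compatibility is automatic since only finitely many integers are excluded, but writing these constraints out cleanly is the combinatorial heart of the argument; I expect the sign-and-exponent bookkeeping required to verify (6) in full detail to be the most tedious routine part.
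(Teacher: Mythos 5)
Your proposal is correct and takes essentially the same route as the paper: there one also sets $A=\bigl(\begin{smallmatrix}-1&m\\-1&m+1\end{smallmatrix}\bigr)$ with $m$ large enough for polynomiality and the constant-term condition, and avoids the finitely many integers for which $(m+1)\log|z_1^{(i)}|+\log|z_2^{(i)}|=0$ at some singular point, declaring items (1)--(3), (5), (6) obvious. Your matrix differs only in having determinant $+1$ instead of $-1$, and your verification of the ``obvious'' items is simply more explicit than the paper's.
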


\begin{proof}
Everything is obvious except of 4.: Choose 
$m_1\in\mathbb{N}$ with $m_1\ge \deg(a_i)$ for all $i\ge 1$. 
Let $\lambda_j^{(i)}:=\log|z_j^{(i)}|$. Choose in addition 
$m_2\in\mathbb{N}$ such that
\begin{equation} \label{that}
(m_2+1)\lambda_1^{(i)}+\lambda_2^{(i)}\neq 0
\end{equation}
for all $i=1,\ldots,r$. Let $m:=\max\{m_1,m_2\}$ and    
\begin{equation*}
A:=
\left(
\begin{array}{cc}
 -1 & m \\
 -1 & m+1  
\end{array}
\right)\in\text{GL}_2(\mathbb{Z}).
\end{equation*}
It is now easily seen that 4. holds for $A$ defined as above. 
\end{proof}

\begin{beme}
Roughly speaken the last lemma says that by changing to the polynomial $Q$ 
we can get rid of singularities in $S^1\times B\cap Z(P)$ as long as we assume 
$Z^*(P)^{\text{sing}}\cap T^2=\emptyset$. For two variable polynomials this 
means that we no longer need the condition $A\subset Z^*(P)^{\text{reg}}$ 
which origins in ``Assumptions 3.2'' from \cite{deni97}.
\end{beme}

\begin{koro} \label{reci}
Let $P\in \mathbb{Q}[t_1,t_2]$ be absolutely irreducible and 
reciprocal. In addition assume $Z^*(P)^{\text{sing}}\cap T^2=\emptyset$. 
Denote by  $\lambda$ the leading coefficient of $P^*(t_1)$.
Then there exists a class
$[A]_P \otimes (2\pi i)^{-1}\in H_1(Z^*(P)^{\text{reg}},\mathbb{Z}(-1))$, 
with  
\begin{equation*} 
m(P)=\log|\lambda|\pm \langle r_{\cal D}(\{t_1,t_2\})
,[A]_P\otimes (2\pi i)^{-1}\rangle.
\end{equation*}
\end{koro}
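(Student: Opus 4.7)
The plan is to reduce to Corollary \ref{claim} by first transforming $P$ to a polynomial $Q$ satisfying the general hypotheses of the chapter, and then using reciprocity together with the real-coefficient assumption to force $R_Q = \emptyset$.

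{\bf Step 1 (reduction).} Apply Lemma \ref{fine} to produce a matrix $M \in \mathrm{GL}_2(\mathbb{Z})$ and $Q := (t_1 t_2)^{\deg(a_0)} \phi_M^* P$ satisfying the general assumptions of the chapter. Items 1, 3 and 6 of the lemma give $m(Q) = m(P)$, $Q^*(t_1) = \lambda$ (hence $m(Q^*) = \log|\lambda|$), and $Q$ is reciprocal. Since $P$ has real coefficients, so does $Q$.

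{\bf Step 2 (main step: $R_Q = \emptyset$).} The reciprocal involution $\sigma : (t_1,t_2) \mapsto (1/t_1, 1/t_2)$ preserves $Z^*(Q)$ by reciprocity, and complex conjugation $F_\infty$ preserves $Z^*(Q)$ because $Q$ has real coefficients. Their composition $\tau := \sigma \circ F_\infty$ is an antiholomorphic involution of $Z^*(Q)$ that fixes $T^2$ pointwise, since $1/\bar t = t$ whenever $|t|=1$. At any smooth point $p = (a,b) \in T^2 \cap Z^*(Q)^{\mathrm{reg}}$, the unique holomorphic branch $t_2 = F(t_1)$ through $p$ must agree as a germ at $t_1=a$ with its $\tau$-image $t_2 = 1/\overline{F(1/\bar t_1)}$. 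Restricting to $t_1 \in S^1$ near $a$ (where $1/\bar t_1 = t_1$) this becomes $F(t_1) = 1/\overline{F(t_1)}$, and therefore $|F(t_1)| = 1$. So every branch of $Z^*(Q)$ meeting $T^2$ at a smooth point lies locally inside $T^2$. Combined with Lemma \ref{fine}(4) and the hypothesis $Z^*(Q)^{\mathrm{sing}} \cap T^2 = \emptyset$, no branch of $A := Z^*(Q) \cap (S^1 \times B)$ can reach $T^2$. Hence $A$ is a disjoint union of smooth circles in $Z^*(Q)^{\mathrm{reg}}$, the path decomposition of $A$ contains only closed loops $\chi_\mu$, and $R_Q = \emptyset$.

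{\bf Step 3 (conclusion).} With $R_Q = \emptyset$, Corollary \ref{claim} applied to $Q$ yields a class $[A]_Q \otimes (2\pi i)^{-1} \in H_1(Z^*(Q)^{\mathrm{reg}},\mathbb{Z}(-1))$ with
\begin{equation*}
m(Q) - m(Q^*) = \langle r_{\cal D}(\{t_1, t_2\}),\, [A]_Q \otimes (2\pi i)^{-1} \rangle .
\end{equation*}
Set $[A]_P := (\phi_M)_* [A]_Q \in H_1(Z^*(P)^{\mathrm{reg}}, \mathbb{Z}(-1))$. A short $K_2$-calculation using bilinearity and the Steinberg relation $\{x,x\} = \{-1,x\}$ shows that $\phi_M^*\{t_1, t_2\}$ coincides with $(\det M)\{t_1, t_2\}$ modulo symbols of the form $\{-1, t_i\}$, whose regulators vanish since $\log|-1| = 0$. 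Because $\det M = \pm 1$, adjointness of $\phi_M^*$ and $(\phi_M)_*$ gives
\begin{equation*}
\langle r_{\cal D}(\{t_1, t_2\}),\, [A]_Q \otimes (2\pi i)^{-1} \rangle = \pm \langle r_{\cal D}(\{t_1, t_2\}),\, [A]_P \otimes (2\pi i)^{-1} \rangle,
\end{equation*}
and combining with $m(P) = m(Q)$ and $m(Q^*) = \log|\lambda|$ the asserted identity follows.

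{\bf Main obstacle.} The heart of the argument is Step 2: reciprocity together with real coefficients and smoothness of $Z^*(Q)$ along $T^2$ must force every branch of $Z^*(Q)$ meeting $T^2$ to lie locally inside $T^2$, so that $[A]$ has no boundary on $T^2$ at all. Once this geometric fact is established, everything else is a formal reduction to Corollary \ref{claim} together with transport through $\phi_M$.
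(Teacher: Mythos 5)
Your Steps 1 and 3 follow the paper's reduction (apply Lemma \ref{fine}, then Corollary \ref{claim}, then transport by $(\phi_M)_*$), and your $K_2$-bookkeeping for $\phi_M^*\{t_1,t_2\}$ is fine. The gap is in Step 2. The claim that every branch of $Z^*(Q)$ meeting $T^2$ at a smooth point lies locally inside $T^2$ --- so that $A$ is a disjoint union of circles and $R_Q=\emptyset$ for the trivial reason that no arc of $A$ ever reaches $T^2$ --- is false. Your local computation presupposes that near $p=(a,b)$ the curve is the graph of a single holomorphic function $t_2=F(t_1)$; this fails exactly at the smooth points where $\partial Q/\partial t_2=0$, i.e.\ where the projection to $t_1$ is ramified, and those are precisely the points where arcs of $A$ terminate on $T^2$. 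Concretely, for the paper's own reciprocal family $P_k=t_1t_2^2+(t_1^2+kt_1+1)t_2+t_1$ with $k=1$, the two roots in $t_2$ over $t_1=e^{i\varphi}$ are $\tfrac{-c\pm\sqrt{c^2-4}}{2}$ with $c=2\cos\varphi+1$: for $\cos\varphi<\tfrac12$ both lie on $S^1$, for $\cos\varphi>\tfrac12$ one lies in $\stackrel{\circ}{B}$ and one outside, and at the smooth ramification points $(e^{\pm i\pi/3},-1)\in T^2$ an arc with $|t_2|<1$ genuinely runs into $T^2$. Near such a point the preimage of $S^1$ on the Riemann surface is two crossing arcs, only one of which is the fixed locus of your $\tau$ (and lies in $T^2$); along the other arc $|t_2|-1$ changes sign.

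What reciprocity actually buys --- and this is the paper's argument --- is weaker and combinatorial: for $t_1\in S^1$ the nonzero roots of $Q(t_1,\cdot)$ come in pairs $t_2,\,1/\overline{t_2}$ (apply the reciprocal functional equation and then complex conjugation, using that $Q$ has real coefficients), so at every point of $Z^*(Q)\cap T^2$ the number of arcs of $Z^*(Q)$ entering $S^1\times B$ equals the number leaving it. After discarding superfluous arcs that run inside $T^2$ (harmless, since $C_2|_{T^2}=0$ and such arcs contribute nothing to the integral), the in- and out-degrees at every such point agree, hence $R_Q=\emptyset$ even though $A\cap T^2\neq\emptyset$. You need this balancing argument rather than the (false) claim that $A$ avoids $T^2$; without it the chain $[A]$ is not known to be a cycle, and the reduction to Corollary \ref{claim} in your Step 3 is unjustified.
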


\begin{proof}
We want to use \ref{fine} to reduce to the situation of \ref{claim}. 
To do so we still have to show that $R_Q=\emptyset$.
\ref{fine} 3. shows us that $Q$ does not
vanish at points of the form $(\cdot,0)$.  
Take $(e(\varphi),r e(\psi))\in S^1\times \mathbb{C}^*$ with  
$Q(e(\varphi),r e(\psi))=0$. Due to \ref{fine} 6. $Q$ is reciprocal
and we have   
\begin{equation*}
Q(e(-\varphi),\frac{1}{r} e(-\psi))=0.
\end{equation*} 
Applying complex conjugation gives us  
\begin{equation*}
Q(e(\varphi),\frac{1}{r} e(\psi))=0.
\end{equation*}   
Suppose we have a path in $Z^*(Q)$ coming from $S^1\times B$ 
intersecting $T^2$ 
in a point  and then leaving $S^1\times B$. The above calculation
then shows us that another path in $Z^*(Q)$ comes from the outside of 
$S^1\times B$ intersects $T^2$ in the same point as above and runs into 
$S^1\times B$. If we have two pathes both running in $T^2$ we can
discard one (because it doesn't contribute to the integral we are 
considering). From these observations we get $R_Q=\emptyset$. 
Let $[A]$ be the class from \ref{following} built with respect to our 
polynomial $Q$. Using the isomorphism 
$\phi_A:Z^*(Q)^{\text{reg}}\rightarrow Z^*(P)^{\text{reg}}$ 
from \ref{fine} we set $[A]_P:=(\phi_A)_* [A]$. 
Applying \ref{claim} and \ref{fine} we conclude the proof.    
\end{proof}

\begin{beme}
\begin{enumerate}
\item Starting from his numerical experiments in \cite{boyd98} Boyd has
conjectured that for reciprocal polynomials with zero locus of genus
$1$ one has always formulas analogous to (\ref{such_as}). Our last
result explains this in some way: the fact that the polynomial in
question is reciprocal allows us to deal with absolute homology or
cohomology classes rather than with relative ones. 
\item The assumption $Z^*(P)^{\text{sing}}\cap T^2=\emptyset$ seems to
be crucial as the following example shows: Set  
\begin{equation*}
P(t_1,t_2)=(t_1^2+t_1+1)t_2^2+(t_1^4-t_1^3-6 t_1^2-t_1+1)t_2+t_1^2
(t_1^2+t_1+1)
\end{equation*}
$Z(P)$ is of genus $1$ and $(S^1 \times B)\cap Z(P)$ is a closed 
path on which the singular points $(-1,1),(1,1)$  lie. 
Boyd gets numerically the unexpected formula 
\begin{equation*}
m(P)=* L'(\chi_1,-1)+* L'(\chi_2,-1) 
\end{equation*}  
where $\chi_1$ and $\chi_2$ are two Dirichlet characters. 
Seemingly one has to build the normalization of the curve $Z(P)$ first.
\end{enumerate}
\end{beme}

\begin{koro} \label{empt}
Let $P\in \mathbb{Q}[t_1,t_2]$ be absolutely irreducible and 
assume that
$Z(P)\cap T^2=\emptyset$. 
Denote by $\lambda$ the leading coefficient of $P^*(t_1)$. 
There exists a class
$[A]_P\otimes (2\pi i)^{-1} \in H_1(Z^*(P)^{\text{reg}},\mathbb{Z}(-1))$, 
such that 
\begin{equation*} 
m(P)=\log|\lambda|\pm \langle r_{\cal D}(\{t_1,t_2\})
,[A]_P\otimes (2\pi i)^{-1}\rangle.
\end{equation*}
\end{koro}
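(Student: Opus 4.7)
The plan is to mimic the proof of Corollary~\ref{reci}, with the hypothesis $Z(P)\cap T^2=\emptyset$ replacing reciprocity as the ingredient that forces $R_Q=\emptyset$. Since $Z(P)\cap T^2=\emptyset$ certainly implies $Z^*(P)^{\text{sing}}\cap T^2=\emptyset$, Lemma~\ref{fine} produces an $A\in\text{GL}_2(\mathbb{Z})$ and an absolutely irreducible $Q=(t_1t_2)^{\deg(a_0)}\phi_A^*P$ satisfying $m(Q)=m(P)$, $Z^*(Q)^{\text{sing}}\cap(S^1\times B)=\emptyset$, and $Q^*(t_1)\equiv\lambda$. In particular $m(Q^*)=\log|\lambda|$, and the general assumptions of the chapter hold for $Q$.

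The only new ingredient is the verification that $R_Q=\emptyset$, and this is essentially immediate. The map $\phi_A$ is a group automorphism of $(\mathbb{C}^*)^2$ that restricts to a self-bijection of $T^2$ (because $A\in\text{GL}_2(\mathbb{Z})$), hence $Z(\phi_A^*P)\cap T^2=\phi_A^{-1}(Z(P)\cap T^2)=\emptyset$; the prefactor $(t_1t_2)^{\deg(a_0)}$ has no zeros on $T^2$, so $Z^*(Q)\cap T^2=\emptyset$. Since by construction $R_Q\subset Z^*(Q)\cap T^2$, we conclude $R_Q=\emptyset$ — more cheaply than in the reciprocal case, where one had to pair up crossings using the symmetry $(t_1,t_2)\mapsto(t_1^{-1},t_2^{-1})$.

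With the hypotheses of Corollary~\ref{claim} verified for $Q$, I would apply it to obtain $m(Q)-m(Q^*)=\langle r_{\cal D}(\{t_1,t_2\}),[A]_Q\otimes(2\pi i)^{-1}\rangle$ for a class $[A]_Q\otimes(2\pi i)^{-1}\in H_1(Z^*(Q)^{\text{reg}}/\mathbb{R},\mathbb{Z}(-1))$, and then transport this class back along the isomorphism $\phi_A\colon Z^*(Q)^{\text{reg}}\to Z^*(P)^{\text{reg}}$ by setting $[A]_P:=(\phi_A)_*[A]_Q$. The only bookkeeping point — the source of the $\pm$ in the statement — is the computation of the symbol under pullback: bilinearity and antisymmetry in $K_2$ give $\phi_A^*\{t_1,t_2\}=(\det A)\{t_1,t_2\}$ modulo terms of the form $\{t_i,-1\}$, whose images under the regulator vanish since $-1$ is constant on the curve. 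Combined with $\det A=\pm 1$, $m(Q)=m(P)$, and $m(Q^*)=\log|\lambda|$, this yields the asserted formula. I do not foresee a genuine obstacle — the argument is structurally identical to that of~\ref{reci}, and the main work has already been done in Lemma~\ref{fine} and Corollary~\ref{claim}.
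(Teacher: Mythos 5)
Your proposal is correct and follows exactly the route the paper takes: its entire proof of this corollary is ``Again we use \ref{claim} and \ref{fine}.'' Your write-up simply makes explicit the details the paper leaves implicit (that $R_Q=\emptyset$ follows directly from $Z(P)\cap T^2=\emptyset$ since $R_Q\subset Z^*(Q)\cap T^2$, and the transport of the class and symbol along $\phi_A$), and these details are all sound.
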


\begin{proof}
Again we use \ref{claim} and \ref{fine}.     
\end{proof}

\section{Boundary maps in $K$-theory}

\label{two}
According to our general policy we want to use Beilinson's 
conjectures and theorems \ref{claim}, \ref{reci} and \ref{empt} to 
produce formulas like (\ref{such_as}). Since Beilinson's conjectures deal with 
projective, smooth varieties over $\mathbb{Q}$ we need to know that
our symbol $\{t_1,t_2\}$ already lies in the motivic cohomology of the
projective, smooth model of our initial curve, 
i.e. we need to know that our symbol
vanishes under the tame symbol. \\ 
We have to fix some notations. Let 
\begin{equation*}
P(t_1,t_2)=\sum\limits_{k_1,k_2} 
\alpha_{k_1,k_2} t_1^{k_1}t_2^{k_2}
\in \mathbb{Q}[t_1,t_2]
\end{equation*}
an absolute irreducible polynomial. Denote by $Z(P)$ the algebraic
variety 
over $\mathbb{Q}$ defined by the equation $P=0$. Let $C$ denote the
normalization of the projective closure of $Z(P)$. Consider $t_1,t_2$ 
as rational functions on $C$. Set 
$S:=\spp(\divi(t_1))\cup \spp(\divi(t_2))$ and $U:=C-S$. \\
Define the Newton polygon ${\cal N}(P)$ of our polynomial $P$ to be
the convex hull of the set 
$\{(k_1,k_2)\mid \alpha_{k_1,k_2}\neq 0 \}$ in 
$\mathbb{R}^2$. For a side $F$ of ${\cal N}(P)$ we parametrize the points
of $F\cap \mathbb{Z}^2$ clockwise in such a manner that 
$(k_1^0,k_2^0),\ldots,(k_1^l,k_2^l)$ are the consecutive lattice
points of $F$. One can attach to every side $F$ of 
${\cal N}(P)$ a one-variable polynomial 
\begin{equation*}
P_{F}(t):=\sum_{i=0}^l \alpha_{k_1^i,k_2^i} t^i\in\mathbb{Q}[t]. 
\end{equation*} 
Boyd calls a polynomial $P$ tempered if all $P_F$ for all sides $F$ of 
${\cal N}(P)$ have only roots of unity as zeroes. \\    
Let us now return to K-theory. Obviously one has 
$\{t_1,t_2\}\in H^2_{\cal M}(U,\mathbb{Q}(2))$. 
As mentioned above we
want to know under which assumptions 
$\{t_1,t_2\}\in H^2_{\cal M}(C,\mathbb{Q}(2))$ 
holds. The following
theorem gives the answer:

\begin{satz} \label{swer}
With notations as above the following two conditions are equivalent
\begin{enumerate}
\item[(1)] $\{t_1,t_2\}\in  H^2_{\cal M}(C,\mathbb{Q}(2))$.
\item[(2)] $P$ is tempered.
\end{enumerate}
\end{satz}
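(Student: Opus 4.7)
The plan is to apply the Gersten/localization long exact sequence in motivic cohomology for the open immersion $U \hookrightarrow C$:
$$H^2_{\cal M}(C, \mathbb{Q}(2)) \hookrightarrow H^2_{\cal M}(U, \mathbb{Q}(2)) \xrightarrow{\oplus_x \partial_x} \bigoplus_{x \in S} k(x)^{\times} \otimes \mathbb{Q},$$
where $\partial_x$ is the tame symbol
$$\partial_x(\{f,g\}) = (-1)^{\ord_x(f)\ord_x(g)} \left( \frac{f^{\ord_x(g)}}{g^{\ord_x(f)}} \right)(x).$$
Exactness reduces condition (1) to the vanishing of $\partial_x(\{t_1,t_2\})$ in $k(x)^{\times} \otimes \mathbb{Q}$ for every $x \in S$, i.e.\ to $\partial_x(\{t_1,t_2\})$ being a root of unity in $k(x)^{\times}$ for all $x$. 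The theorem then becomes a matching of these tame symbols with the roots of the face polynomials $P_F$.

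First I would describe $S$ via toric geometry. Embed $(\mathbb{C}^*)^2$ into a smooth toric surface $X_{\Sigma}$ whose fan $\Sigma$ is a smooth refinement of the inner normal fan of ${\cal N}(P)$, and let $\overline{Z(P)}$ be the closure of $Z(P)$ in $X_{\Sigma}$. Standard toric intersection theory shows that each torus-invariant divisor $D_F$ (associated to a side $F$ of ${\cal N}(P)$) meets $\overline{Z(P)}$ in exactly the points coming from the roots of $P_F(t)$, with $S \subset C$ being the preimages of these boundary points in the normalization.

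Next I would compute $\partial_x\{t_1, t_2\}$ at a point $x$ lying over a root $\zeta$ of $P_F$. Let $(a,b) \in \mathbb{Z}^2$ be the primitive outward normal to $F$ and complete it to a $\mathbb{Z}$-basis with a primitive tangent $(c,d)$ so that $ad - bc = \pm 1$. Performing the monomial change of variables $t_1 = \pi^a u^c$, $t_2 = \pi^b u^d$ and letting $m = \min_{(k_1,k_2) \in {\cal N}(P)}(ak_1 + bk_2)$, one obtains, in the toric chart adapted to $F$,
$$P(t_1,t_2) = \pi^m \bigl( u^{N_0} P_F(u^{\pm 1}) + \pi \cdot R(\pi, u) \bigr)$$
for some integer $N_0$. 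Hence near $x$, $\pi$ is (up to a unit and a ramification index) a local uniformizer of $C$, $u$ is a smooth function taking the value $\zeta$ at $x$, and $\ord_x(t_1), \ord_x(t_2)$ are $a, b$ times $\ord_x(\pi)$. Substituting into the tame symbol and collecting exponents, one gets
$$\partial_x(\{t_1, t_2\}) = \pm \zeta^{\,ad-bc} = \pm \zeta^{\pm 1} \in k(x)^{\times}.$$
In particular $\partial_x(\{t_1, t_2\})$ is torsion in $k(x)^{\times}$ if and only if $\zeta$ is a root of unity.

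Combining the two steps, all tame symbols vanish in $\bigoplus_{x \in S} k(x)^{\times} \otimes \mathbb{Q}$ if and only if every root of every face polynomial $P_F$ is a root of unity, i.e.\ exactly when $P$ is tempered; this gives the desired equivalence (1)~$\Leftrightarrow$~(2). The main obstacle is the local computation sketched above: one must carefully track how $t_1, t_2$ transform under the toric monomial substitution attached to each side $F$, isolate $P_F$ as the leading coefficient of $P$ in those coordinates, and handle the ramification arising when the edge $F$ has lattice length greater than $1$ or when $\overline{Z(P)}$ is singular at the boundary (so that passing to the normalization $C$ is needed) without being misled by signs, orientations, or base-change subtleties.
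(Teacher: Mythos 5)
Your argument is correct and rests on the same key idea as the paper's proof: by the localization sequence, membership of $\{t_1,t_2\}$ in $H^2_{\cal M}(C,\mathbb{Q}(2))$ is equivalent to the vanishing of the tame symbols $\partial_x\{t_1,t_2\}$ at all $x\in S$, and these symbols are, up to roots of unity and a nonzero exponent, exactly the roots of the face polynomials $P_F$. The difference lies in the execution. The paper carries out the computation by hand only for Boyd's special shape $P=A(t_1)t_2^2+B(t_1)t_2+C(t_1)$, by explicitly determining $\divi(t_1)$ and $\divi(t_2)$ on $C$ and evaluating the tame symbol point by point, and refers to Rodriguez-Villegas for the general statement; you instead treat a general $P$ directly by compactifying $(\mathbb{C}^*)^2$ torically along (a refinement of) the normal fan of ${\cal N}(P)$ and reading off the local data at each point of $S$ from the monomial substitution adapted to the corresponding edge. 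Your route is the more systematic one and is essentially the cited general proof, at the cost of the toric bookkeeping (in particular you must check that every $x\in S$ does lie over an edge divisor with $u(x)\in\mathbb{C}^*$, which holds because the extreme coefficients of each $P_F$ are nonzero, and that divisors coming from added rays contribute nothing); the paper's computation is more elementary but limited to its special class. One small correction: in your local computation the tame symbol is $\pm\zeta^{\pm e}$, where $e$ is the ramification index of $x$ over the boundary divisor, not simply $\pm\zeta^{\pm 1}$; since $e\ge 1$, this power of $\zeta$ is torsion in $k(x)^{\times}$ if and only if $\zeta$ is, so the equivalence with temperedness is unaffected after tensoring with $\mathbb{Q}$.
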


\begin{proof}
The general case is due to 
Rodriguez-Villegas (see \cite{rodr98} chapter 8). 
For the special form $P(t_1,t_2)=A(t_1)t_2^2+B(t_1)t_2+C(t_1)$ of 
polynomials considered by Boyd the proof is an easy but tedious calculation. 
At first one 
has to calculate the divisors of $t_1$ and $t_2$ as rational functions 
on $C$. After doing so one can determine the tame symbol
\begin{equation*} 
K_2(\mathbb{Q}(C))\otimes\mathbb{Q}
\overset{\partial=\coprod\partial_p}{\rightarrow}
\coprod\limits_{p\in C(\bar{\mathbb{Q}})} 
\mathbb{Q}(p)^*\otimes\mathbb{Q}.
\end{equation*}
where
\begin{equation*} 
\partial_p(\{f,g\})=\left[
(-1)^{\ord_p(f)\ord_p(g)}
\frac{f^{\ord_p(g)}}{g^{\ord_p(f)}}\right](p)\otimes 1.
\end{equation*}
It shows up that while $p$ runs over $p\in\spp(\divi(t_1))\cup
\spp(\divi(t_2))$
there always exists a zero $\zeta_p$ of a polynomial $P_F$ for a side $F$
of ${\cal N}(P)$ such that  
\begin{equation*} 
\partial_p(\{t_1,t_2\})=\zeta_p \otimes 1.
\end{equation*}
This takes care of the implication (2) $\Rightarrow$ (1). 
One also notes that for every side $F$ of ${\cal N}(P)$ and every zero 
$\zeta$ of the polynomial $P_F$ attached to the side there is a 
$p_{\zeta}\in\spp(\divi(t_1))\cup\spp(\divi(t_2))$ such that
\begin{equation*} 
\partial_{p_{\zeta}}(\{t_1,t_2\})=\zeta \otimes 1.
\end{equation*}
This gives us the implication (1) $\Rightarrow$ (2). 
\end{proof}

Let us now assume that $C$ is an elliptic curve over $\mathbb{Q}$, i.~e.~is
of genus $1$ and has got a $\mathbb{Q}$-rational point. Beilinson's 
conjectures deal with 
$H^2_{\cal M}(C,\mathbb{Q}(2))_{\mathbb{Z}} \subset 
H^2_{\cal M}(C,\mathbb{Q}(2))$.
So even if our symbol is already an
element of $H^2_{\cal M}(C,\mathbb{Q}(2))$ it has to overcome another
obstruction, the so
called integral obstruction $\delta$ ($\cal C/\mathbb{Z}$ denotes the 
minimal regular model of $C$): 
\begin{displaymath}
0 \rightarrow
H^2_{\cal M}(C,\mathbb{Q}(2))_{\mathbb{Z}}\rightarrow  
H^2_{\cal M}(C,\mathbb{Q}(2))
\overset{\delta=\coprod\delta_p}{\rightarrow}
\coprod\limits_{p} 
K_1'({\cal C}_p)\otimes\mathbb{Q} \rightarrow \ldots.
\end{displaymath}
The following theorem gives us an example for a whole family of curves
where the integral obstruction of a certain symbol vanishes (enabling
us to produce a formula like (\ref{such_as})).

\begin{satz}
Take the following family of polynomials from $\mathbb{Z}[t_1,t_2]$:
\begin{equation*}
P_k(t_1,t_2):=t_1t_2^2+ (t_1^2+kt_1+1)t_2+t_1.
\end{equation*}
Assume $k\in \mathbb{Z}-\{0,\pm 4\}$. Then we have 
\begin{enumerate}
\item The zero locus $Z(P_k)$ is birationally equivalent to an  
elliptic curve $C_k$ over $\mathbb{Q}$.
\item Assuming Beilinson's conjectures for elliptic curves we get   
\begin{equation*} 
m(P_k)=*L'(C_k,0)
\end{equation*} 
\end{enumerate}
\end{satz}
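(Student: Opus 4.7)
For (1), I would compute the Newton polygon of $P_k$, whose vertices are $(0,1), (1,0), (2,1), (1,2)$ with $(1,1)$ the unique interior lattice point. Each of the four side polynomials equals $1+t$ and therefore has only simple roots of unity as zeroes. By Khovanskii's genus formula, the normalization $C_k$ of the projective closure then has geometric genus equal to the number of interior lattice points, namely one. The excluded values $k \in \{0, \pm 4\}$ are precisely those for which the discriminant $(t_1^2+(k-2)t_1+1)(t_1^2+(k+2)t_1+1)$ of $P_k$ viewed as a quadratic in $t_2$ acquires a repeated root, so for $k \in \mathbb{Z}\setminus \{0, \pm 4\}$ the affine curve $Z(P_k)$ is in fact smooth in the torus. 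Finally, $(t_1, t_2) = (0,0)$ lies on $Z(P_k)$ and is smooth (since $\partial P_k/\partial t_1(0,0)=1$), providing the required $\mathbb{Q}$-rational point on $C_k$.

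For (2), the plan is to combine Corollary \ref{reci} with Theorem \ref{swer}, verify vanishing of the integral obstruction, and then invoke Beilinson's conjecture for $C_k/\mathbb{Q}$. Absolute irreducibility of $P_k$ for $k \neq 0$ follows from a short factorization argument in $\bar{\mathbb{Q}}(t_1)[t_2]$; reciprocity is the direct identity $(t_1t_2)^2 P_k(1/t_1,1/t_2) = P_k(t_1, t_2)$; and the smoothness statement above in particular gives $Z^*(P_k)^{\text{sing}} \cap T^2 = \emptyset$. Since $P_k^*(t_1) = t_1$ has leading coefficient $\lambda = 1$, Corollary \ref{reci} yields
\begin{equation*}
m(P_k) = \pm \langle r_{\cal D}(\{t_1, t_2\}), [A]_{P_k} \otimes (2\pi i)^{-1}\rangle.
\end{equation*}
Moreover each side polynomial of the Newton polygon equals $1+t$, whose only zero is the root of unity $-1$, so $P_k$ is tempered and Theorem \ref{swer} places $\{t_1, t_2\}$ in $H^2_{\cal M}(C_k, \mathbb{Q}(2))$.

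The hard part is then to verify the integral condition $\delta(\{t_1,t_2\})=\coprod_p \delta_p(\{t_1,t_2\})=0$. For each prime $p$ of bad reduction of $C_k$ I would pass to a global minimal Weierstrass equation, run Tate's algorithm to obtain the minimal regular model ${\cal C}_k/\mathbb{Z}_p$, and compute the divisors of $t_1$ and $t_2$ on the components of the special fiber; from these the local contribution $\delta_p(\{t_1,t_2\}) \in K_1'({\cal C}_{k,p})\otimes\mathbb{Q}$ becomes explicit, and one must show it is torsion. The expected mechanism is that the tempered structure of $P_k$ forces these local contributions to reduce to roots of unity, analogously to how temperedness controls the tame symbol in Theorem \ref{swer}. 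Once $\{t_1,t_2\}$ has been lifted into $H^2_{\cal M}(C_k,\mathbb{Q}(2))_{\mathbb{Z}}$, Beilinson's conjecture for $C_k$ asserts that the Deligne regulator pairing with any non-trivial class in $H_1(C_k/\mathbb{R}, \mathbb{Z}(-1))$ equals $L'(C_k, 0)$ up to a non-zero rational multiple, so combining this with the displayed formula produces $m(P_k) = * L'(C_k, 0)$ as desired.
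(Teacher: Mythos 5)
Your overall architecture for part (2) --- Corollary \ref{reci} plus Theorem \ref{swer}, then the integral obstruction, then Beilinson's conjecture --- matches the paper, and your verifications of reciprocity, temperedness and smoothness on $T^2$ are correct. For part (1) you argue via the Newton polygon and a genus count, where the paper simply exhibits the birational map $(t_1,t_2)\mapsto(k(t_1+t_2)^{-1},-k(t_1+t_2)^{-2}t_1(t_1+t_2+k))$ onto the Weierstrass model $y^2+kxy+ky=x^3+x^2$ with $c_4=k^4-16k^2+16$ and $\Delta=k^2(k-4)(k+4)$. Your route is acceptable, but the explicit model is not a luxury here: the entire integrality argument is carried out on it.

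The genuine gap is in your treatment of the integral obstruction. You write that the tempered structure of $P_k$ should force the local contributions $\delta_p$ to be torsion, ``analogously to how temperedness controls the tame symbol.'' This is not a valid mechanism: temperedness governs the residues $\partial_p$ at closed points of the generic fibre (that is the content of Theorem \ref{swer}) and says nothing about the maps $\delta_p$ into $K_1'({\cal C}_p)\otimes\mathbb{Q}$ at primes of bad reduction; these are independent obstructions, and the remark following the theorem (on Rodriguez-Villegas's announced arguments for the vanishing of $\delta$ for \emph{certain} symbols) only makes sense because vanishing is not automatic for tempered polynomials. The paper's actual argument is a concrete computation: at a prime of split multiplicative reduction it invokes the Schappacher--Scholl formula $\delta_p(\{t_1,t_2\})=\pm\frac{1}{3N}\sum_{\mu,\nu}d_{t_1}(\mu)d_{t_2}(\nu+\mu)B_3(\langle\nu/N\rangle)$ on the N\'eron $N$-gon, computes $\divi(t_1)=(O)+(Q)-(2Q)-(3Q)$ and $\divi(t_2)=(O)-(Q)-(2Q)+(3Q)$ with $Q=(0,0)$ a point of order $4$, and checks via $C(\mathbb{Q}_p)/C_0(\mathbb{Q}_p)$ that $O$ and $2Q$ reduce into the identity component while $Q$ and $3Q$ reduce to a common component, whence $d_{t_1}\equiv 0$ and every $\delta_p$ vanishes (with separate minimality checks at $p=2$). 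Nothing in your proposal supplies this cancellation. A second, smaller omission: to obtain a \emph{non-zero} rational multiple of $L'(C_k,0)$ you must know that the symbol and the cycle class are non-trivial; the paper deduces this from $m(P_k)\neq 0$ via a standard Mahler-measure inequality, a step you skip.
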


\begin{proof}
Assume $k\in \mathbb{Z}-\{0,\pm 4\}$. Let $C=C_k$ be the elliptic curve 
defined by the Weierstrass equation
\begin{equation} \label{weierequation}
y^2+kxy+ky=x^3+x^2.
\end{equation} 
The map 
\begin{align*}
Z(P_k) &\rightarrow C \\
(t_1,t_2) &\mapsto (k(t_1+t_2)^{-1},-k(t_1+t_2)^{-2}t_1(t_1+t_2+k)) 
\end{align*}
establishes a birational equivalence between the two curves thereby taking 
care of our first claim. 
Now using \ref{reci} we show 
\begin{equation} \label{we_show}
m(P_k)=\pm\langle r_{\cal D}\{t_1,t_2\},\gamma_k\rangle
\end{equation} 
with $\{t_1,t_2\}\in H^2_{\cal M}(Z^*(P_k)^{\text{reg}},\mathbb{Q}(2))$ and 
$\gamma_k\in H_1(Z^*(P_k)^{\text{reg}},\mathbb{Z}(-1))$. 
Theorem \ref{swer} gives us  
\begin{equation*}
\{t_1,t_2\}\in 
H^2_{\cal M}(C,\mathbb{Q}(2)).
\end{equation*} 
Now we have to calculate the integral obstruction of the symbol 
$\{t_1,t_2\}$. If the reduction at $p$ ist not split multiplicative we 
have $K_1'({\cal C}_p)\otimes\mathbb{Q}=0$. Therefore we confine ourselves to 
the case of split multiplicative reduction at $p$. Then ${\cal C}_p$ is a 
N\'eron $N$-gon. For the divisors 
\begin{equation*}
\divi(t_1)=\sum\limits_i a_i (X_i) \text{ and }
\divi(t_2)=\sum\limits_j b_j (Y_j) 
\end{equation*}
set 
\begin{equation*}
d_{t_1}(\nu)=\sum\limits_i a_i d_{X_i}(\nu),
\end{equation*}
where 
\begin{equation*}
d_{X_i}(\nu)=
\begin{cases}
1 & \text{if $X_i$ reduces to the $\nu$-th side of the N-gon} \\
0 & \text{else.}
\end{cases}
\end{equation*}
Using this notation we have the following formula which is due to Schappacher 
and Scholl (see \cite{scha_scho91} chapter 3):
\begin{equation} \label{chapterthree}
\delta_p(\{t_1,t_2\})=\pm\frac{1}{3N}
\sum_{\substack{\mu\in \mathbb{Z} \\ \nu\in \mathbb{Z}}}
d_{t_1}(\mu)d_{t_2}(\nu+\mu)
B_3\left(\left\langle\frac{\nu}{N} \right\rangle\right).
\end{equation}
Here $B_3(x)=x^3-\frac{3}{2}x^2+\frac{1}{2}x$ (the third Bernoulli 
polynomial) and 
$\langle\frac{\nu}{N}\rangle\equiv\frac{\nu}{N}\bmod\mathbb{Z}$ subject to 
the condition $\langle\frac{\nu}{N}\rangle\in [0,1[$. \\
Returning to our special situation let us remark that for the curve $C$ we 
get $c_4=k^4-16k^2+16$ and $\Delta=k^2(k-4)(k+4)$. Furthermore on $C$ our 
divisors read as 
\begin{align*}
\divi(t_1)&=(O)+(Q)-(2Q)-(3Q) && \text{and} \\
\divi(t_2)&=(O)-(Q)-(2Q)+(3Q)
\end{align*} 
where $O$ is the origin of $C$ and $Q=(0,0)$. Clearly we have $2Q=(-1,0)$,
$3Q=(0,-k)$ and $4Q=O$. \\
In computing the reduction of $C$ and the four points $O,Q,2Q,3Q$ on it at 
a prime $p$ let us first assume $p\ge 3$. Clearly the inequation 
$v_p(\Delta)>0$ is then equivalent to having $p|k$, $p|(k-4)$ or $p|(k+4)$. 
Say $p|k$. 
Since $v_p(c_4)=0$ our Weierstrass equation (\ref{weierequation}) 
is minimal and it has multiplicative reduction at $p$ which in addition 
we assume to be split multiplicative. The reduced equation
\begin{equation*}
\tilde{C}: y^2=x^3+x^2
\end{equation*}    
has got the singular point $(0,0)$. Now let  
$\tilde{C}_{\text{ns}}(\mathbb{F}_p)$ denote the set of non-singular points 
of $\tilde{C}(\mathbb{F}_p)$ i.e.~in our setting  
$\tilde{C}_{\text{ns}}(\mathbb{F}_p)=\tilde{C}(\mathbb{F}_p)-\{(0,0)\}$.
Furthermore set 
$C_0(\mathbb{Q}_p) = \{P\in C(\mathbb{Q}_p)\mid \tilde{P} \in 
\tilde{C}_{\text{ns}}(\mathbb{F}_p)\}$. Clearly we have 
\begin{equation} \label{clearly_we_have2}
\ord_{C(\mathbb{Q}_p)/C_0(\mathbb{Q}_p)}(mQ) =
\begin{cases}
1 &\text{for $m=0,2$} \\
2 & \text{for $m=1,3$}.
\end{cases}
\end{equation}
Let us now consider the following well kown fact on the the N\'eron model 
${\cal N}/\mathbb{Z}_p$ of the 
elliptic curve $C/\mathbb{Q}_p$: set 
$\tilde{\cal N}={\cal N}\times_{\mathbb{Z}_p } \mathbb{F}_p$
and let $\tilde{\cal N}^0$ denote the component of the identity in the 
group variety $\tilde{\cal N}$. Then under the 
identification ${\cal N}(\mathbb{Z}_p)\cong C(\mathbb{Q}_p)$ we get
\begin{equation*} 
\tilde{\cal N}(\mathbb{F}_p)/\tilde{\cal N}^0(\mathbb{F}_p)
\cong C(\mathbb{Q}_p)/C_0(\mathbb{Q}_p).
\end{equation*}
Using this fact and (\ref{clearly_we_have2}) we have 
\begin{align*}
d_O(\nu) &= d_{2Q}(\nu) && \text{ and} \\
d_Q(\nu) &= d_{3Q}(\nu)
\end{align*} 
and hence 
\begin{equation*}
d_{t_1}(\nu)=1\cdot d_O(\nu)+1\cdot d_Q(\nu)-1\cdot d_{2Q}(\nu)
-1\cdot d_{3Q}(\nu)=0.
\end{equation*} 
The cases $p\ge 3$, $p|(k-4)$ and $p\ge 3$, $p|(k+4)$ proceed 
in a very similar line and are therefore omitted. In the case  $p=2$ the 
reduction is additive for $0<v_2(k)<4$. For $v_2(k)\ge 4$ one changes to a 
minimal Weierstrass equation and concludes almost verbatim like above. \\
After all we get  
\begin{equation*}
\{t_1,t_2\}\in 
H^2_{\cal M}(C_k,\mathbb{Q}(2))_{\mathbb{Z}}.
\end{equation*} 
A standard inequality from the theory of Mahler measures 
shows us that $m(P_k)\neq 0$ and therefore by 
(\ref{we_show}) 
$\{t_1,t_2\}\neq 0$ and $\gamma_k\neq 0$. Now using Beilinson's
conjectures for elliptic curves we get  
\begin{equation*} 
m(P_k)=*L'(C_k,0).
\end{equation*}  
\end{proof}

\begin{beme}
\begin{enumerate}
\item Rodriguez-Villegas has announced that he found
theoretical arguments for the vanishing of the integral obstruction of
certain symbols. 
\item Boyd has also given several examples for which it is possible to
prove a formula like (\ref{such_as}) rigorously, i.e without assuming
the validity of Beilinson's conjectures. In these examples the
elliptic curves in consideration have got CM. This crucial fact allows
one to apply methods from \cite{de_wi88}. For the details see 
\cite{born98} chapter 5.5.
\end{enumerate}
\end{beme}

\section{Curves of genus $2$}

\label{three}
In \cite{boyd98} Boyd has also computed lots of examples where curves of
genus $2$ occur. Set for example
\begin{equation*}
P(t_1,t_2):=(t_1^2+t_1+1)t_2^2+t_1(t_1+1)t_2+t_1(t_1^2+t_1+1).
\end{equation*}
Let $C$ again be the normalization of the projective closure of $Z(P)$.
The curve $C$ has genus $2$. Its Jacobian $J(C)$ is reducible, 
i.~e.~it is isogenous to a product of two elliptic curves. 
Numerically it seems that 
\begin{equation*}
m(P)=* L'(E,0),
\end{equation*}
where $E$ is one of the above factors of the Jacobian. It is by no
means clear why the Mahler measure ``ignores'' the other elliptic
curve. In this chapter we exhibit the $K$-theoretical reasons for this
behaviour. \\
First we have to fix notations. 
Let $P(t_1,t_2):=A(t_1)t_2^2+B(t_1)t_2+C(t_1)\in \mathbb{Z}[t_1,t_2]$
a tempered, reciprocal polynomial. 
Set $D(t_1):=B(t_1)^2-4A(t_1)C(t_1)$. Assume that 
$D(t_1)=(t_1+1)^{2r}\tilde{D}(t_1)$, where $r\in\mathbb{N}$
and $\tilde{D}\in \mathbb{Z}[t_1]$ is of degree $5$ or $6$ with
non-vanishing discriminant. Furthermore let $s$ be the unique natural
number subject to the condition  
\begin{equation*}
P(t_1,t_2)=t_1^st_2^2P\left(\frac{1}{t_1},\frac{1}{t_2}\right).
\end{equation*}
Assume finally that $s=3+r$. This is in some way a natural assumption
because it follows easily from the above assumptions that we always have 
$s\ge 3+r$.  \\
One defines easily a birational equivalence from $Z(P)$ to the curve 
\linebreak 
$Z(y^2-\tilde{D}(x))$. Furthermore it can be shown that   
$t_1^6\tilde{D}(\frac{1}{t_1})=\tilde{D}(t_1)$. Using the transformation
\begin{align*}
x &= \frac{S+1}{S-1} \\
y &= \frac{T}{(S-1)^3}
\end{align*}
(see \cite{cas_fly96} p.~160) we can get our curve 
$Z(y^2-\tilde{D}(x))$ birational
equivalent to a curve with model $T^2=Q(S^2)$, where 
$Q(z):=c_3z^3+c_2z^2+c_1z+c_0\in \mathbb{Q}[z]$ with non-vanishing 
discriminant and $c_0 c_3\neq 0$. Let $C$ be the normalization of the
projective closure of $T^2=Q(S^2)$. 
Define $\theta$ to be the symbol
$\{t_1,t_2\}$ on $Z(P)$ transformed to our current model
$T^2=Q(S^2)$. Since $P$ is tempered we have 
$\theta\in H^2_{\cal M}(C,\mathbb{Q}(2))$. \\
In this situation we use Theorem 14.1.1 from \cite{cas_fly96}. 
As in the proof
of the theorem we define two elliptic curves $E_1: w^2=Q(z)$ and 
$E_2: w^2=z^3Q(\frac{1}{z})$ and two Galois coverings 
$\varphi_1:C\rightarrow E_1$, $(S,T)\mapsto (S^2,T)$
and $\varphi_2:C\rightarrow E_2$, $(S,T)\mapsto (S^{-2},TS^{-3})$. The
Galois group $\gal(\mathbb{Q}(C)/\varphi_1^*\mathbb{Q}(E_1))$ is 
generated by 
\begin{align*}
\tau_1:\mathbb{Q}(C) &\rightarrow \mathbb{Q}(C) \\
 S &\mapsto -S \\
 T &\mapsto T.
\end{align*}  
One projection formula from motivic cohomology reads therefore 
$\varphi_1^*\circ\varphi_{1,*} = \id+\tau_1$. 
Since we assume $P$ to be tempered we get 
\begin{align*}
\theta &=\left\{
\frac{S+1}{S-1},
\frac{\frac{(2S)^rT}{(S-1)^{3+r}}
-B\left(\frac{S+1}{S-1}\right)}{2A\left(\frac{S+1}{S-1}\right)}
\right\} \\ 
&=\left\{\frac{S+1}{S-1},
\frac{\frac{(2S)^rT}{(S-1)^{3+r}}
-B\left(\frac{S+1}{S-1}\right)}{2\alpha}\right\}
\end{align*} 
where $\alpha$ denotes the leading coefficient of the polynomial $A$. 
In the above computation we have used the fact $\{x, x-\zeta\}=0$ for 
$\zeta$ a root of unity.
Now applying our assumptions $P$ reciprocal and $s=3+r$ we compute
\begin{align*}
\tau_1\theta &= \left\{\frac{S-1}{S+1},
\frac{-\frac{(2S)^rT}{(S+1)^{3+r}}
-B\left(\frac{S-1}{S+1}\right)}{2\alpha}\right\} \\ 
&= \left\{\frac{S-1}{S+1},
\frac{-\frac{(2S)^rT}{(S+1)^{3+r}}
-\left(\frac{S-1}{S+1}\right)^s
B\left(\frac{S+1}{S-1}\right)}{2\alpha}\right\} \\ 
&= \left\{
\frac{S-1}{S+1},
\left(\frac{S-1}{S+1}\right)^s
\frac{-\frac{(2S)^rT}{(S-1)^{3+r}}
-B\left(\frac{S+1}{S-1}\right)}{2\alpha}\right\}\\
&= \left\{
\frac{S-1}{S+1},
\frac{\frac{(2S)^rT}{(S-1)^{3+r}}
+B\left(\frac{S+1}{S-1}\right)}{-2\alpha}\right\}. 
\end{align*}
Let us now denote by $\gamma$ the leading coefficient of the polynomial $C$.
Since $P$ is tempered we clearly have $\alpha=\pm\gamma$. Further using 
the fact 
\begin{equation*}
\frac{\frac{(2S)^r T}{(S-1)^{3+r}}
+B\left(\frac{S+1}{S-1}\right)}{-2C\left(\frac{S+1}{S-1}\right)}=
\frac{2A\left(\frac{S+1}{S-1}\right)}
{\frac{(2S)^r T}{(S-1)^{3+r}}
-B\left(\frac{S+1}{S-1}\right)}
\end{equation*}
we proceed in doing our computation 
\begin{align*}
\tau_1\theta & = \left\{
\frac{S-1}{S+1},
\frac{\frac{(2S)^r T}{(S-1)^{3+r}}
+B\left(\frac{S+1}{S-1}\right)}{-2\gamma}\right\} \\
& = \left\{
\frac{S-1}{S+1},
\frac{\frac{(2S)^r T}{(S-1)^{3+r}}
+B\left(\frac{S+1}{S-1}\right)}{-2C\left(\frac{S+1}{S-1}\right)}
\right\} \\
& = \left\{
\frac{S-1}{S+1},
\frac{2A\left(\frac{S+1}{S-1}\right)}{\frac{(2S)^r T}{(S-1)^{3+r}}
-B\left(\frac{S+1}{S-1}\right)}\right\} \\
& = \left\{
\frac{S+1}{S-1},
\frac{\frac{(2S)^r T}{(S-1)^{3+r}}
-B\left(\frac{S+1}{S-1}\right)}{2A\left(\frac{S+1}{S-1}
\right)} \right\} \\
& = \theta.
\end{align*}
At last we get 
\begin{equation*}
\varphi_1^*\circ\varphi_{1,*}\theta=2\theta.  
\end{equation*}    
An analogous computation can be done using the Galois covering 
$\varphi_2$. The result is 
\begin{equation*}
\varphi_2^*\circ\varphi_{2,*}\theta=0  
\end{equation*}    
and therefore $\varphi_{2,*}\theta=0$. \\
The above discussion is the main ingredient in the following theorem. 
\begin{satz}
Let the assumption of the above discussion apply. Furthermore 
\begin{enumerate}
\item let $P$ be absolute irreducible,
\item let the coefficients of $P$ associated to the extremal points of 
${\cal N}(P)$ have absolute value $1$,
\item let $Z^*(P)^{\text{sing}}\cap T^2=\emptyset$ and  
\item let $m(P)\neq 0$.
\end{enumerate} 
In addition assume 
\begin{equation*}
\varphi_{1,*}\theta\in H^2_{\cal M}(E_1,\mathbb{Q}(2))_{\mathbb{Z}}
\end{equation*}
and the validity of the Beilinson conjectures for elliptic curves. 
One has 
\begin{equation*}
m(P)=* L'(E_1,0).
\end{equation*}
\end{satz}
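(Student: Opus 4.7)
The plan is to combine Corollary \ref{reci} with the motivic identity $\varphi_1^*\varphi_{1,*}\theta = 2\theta$ already established in the discussion preceding the theorem, and then to invoke Beilinson's conjecture for $E_1$.

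First, assumption 2 forces the leading coefficient $\lambda$ of $P^*(t_1)=P(t_1,0)$ to satisfy $|\lambda|=1$, so that $\log|\lambda|=0$. Combined with assumptions 1 and 3, Corollary \ref{reci} applies (since $P$ is absolutely irreducible and reciprocal with $Z^*(P)^{\text{sing}}\cap T^2=\emptyset$) and yields
\begin{equation*}
m(P)=\pm\langle r_{\cal D}\{t_1,t_2\},[A]_P\otimes (2\pi i)^{-1}\rangle.
\end{equation*}
Transporting through the chain of birational equivalences to the model $C\colon T^2=Q(S^2)$, and using Theorem \ref{swer} (valid because $P$ is tempered, so $\theta\in H^2_{\cal M}(C,\mathbb{Q}(2))$), we rewrite this as $m(P)=\pm\langle r_{\cal D}(\theta),\gamma\rangle$ for a class $\gamma\in H_1(C/\mathbb{R},\mathbb{Z}(-1))$.

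Next, I exploit the covering $\varphi_1\colon C\to E_1$. From $\varphi_1^*\varphi_{1,*}\theta=2\theta$, compatibility of the regulator with pullback, and the adjunction $\langle\varphi_1^*(\cdot),\gamma\rangle=\langle\cdot,\varphi_{1,*}\gamma\rangle$ (with $\varphi_1$ defined over $\mathbb{Q}$, so preserving $+$-eigenspaces of $\overline{F}_\infty^*$), I obtain
\begin{equation*}
m(P)=\pm\tfrac{1}{2}\langle r_{\cal D}(\varphi_{1,*}\theta),\varphi_{1,*}\gamma\rangle,
\end{equation*}
with $\varphi_{1,*}\gamma\in H_1(E_1/\mathbb{R},\mathbb{Z}(-1))$. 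Finally, assumption 6 places $\varphi_{1,*}\theta\in H^2_{\cal M}(E_1,\mathbb{Q}(2))_{\mathbb{Z}}$; condition 4 ($m(P)\neq 0$) forces $\theta\neq 0$ and hence, via $\varphi_1^*\varphi_{1,*}\theta=2\theta$, also $\varphi_{1,*}\theta\neq 0$, together with non-vanishing of the pairing on the right-hand side. Beilinson's conjecture for $E_1$ then identifies this pairing with a non-zero rational multiple of $L'(E_1,0)$, giving $m(P)=*L'(E_1,0)$.

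The main obstacles are not the algebraic manipulations (which are governed by the projection formula $\varphi_1^*\varphi_{1,*}=\id+\tau_1$ already done) but the two transport issues buried in the first step: verifying that the class $[A]_P$ produced by Corollary \ref{reci} passes through the chain of birational equivalences to a well-defined real integral cycle $\gamma$ on the smooth projective model $C$ (extension across the finitely many points of $S=C\setminus U$ is justified by temperedness, which places $\theta$ in $H^2_{\cal M}(C,\mathbb{Q}(2))$ so that the boundary contribution is trivial), and ensuring that $\varphi_{1,*}\gamma$ is non-zero in the appropriate sense on $E_1$ so that Beilinson yields a genuinely non-zero multiple of $L'(E_1,0)$. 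The assumption $m(P)\neq 0$ makes the latter automatic; the real work of ``selecting'' $E_1$ and discarding $E_2$ has already been done by the identity $\varphi_{2,*}\theta=0$ established in the preamble.
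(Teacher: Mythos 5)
Your proposal is correct and follows essentially the same route as the paper: apply Corollary \ref{reci} (with $\log|\lambda|=0$ from the hypothesis on the extremal coefficients), transport to the model $C$, insert $\varphi_1^*\varphi_{1,*}\theta=2\theta$, use compatibility of $r_{\cal D}$ with $\varphi_1^*$ and the adjunction of the pairing to land on $\pm\tfrac{1}{2}\langle r_{\cal D}(\varphi_{1,*}\theta),\varphi_{1,*}\gamma\rangle$, and conclude via $m(P)\neq 0$ and Beilinson for $E_1$. The only difference is that you spell out the transport and non-vanishing issues the paper passes over silently; the mathematics coincides.
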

 
\begin{proof}
Using \ref{reci}   
and transforming everything into the model $C$ we get  
\begin{equation*} 
m(P)=\pm \langle r_{\cal D}(\theta)
,\gamma\rangle
\end{equation*}
where $\gamma\in H_1(C,\mathbb{Z}(-1))$. 
We conclude 
\begin{align*} 
m(P) &=\pm \langle r_{\cal D}(\theta),\gamma\rangle \\
 &= \pm \frac{1}{2}\langle r_{\cal D}(2\theta),\gamma\rangle \\
 &=  \pm\frac{1}{2} \langle 
r_{\cal D}((\varphi_1^*\circ\varphi_{1,*})\theta),\gamma\rangle \\
 &= \pm\frac{1}{2} \langle 
\varphi_1^* r_{\cal D}(\varphi_{1,*}\theta),\gamma\rangle \\
 &= \pm\frac{1}{2} \langle 
r_{\cal D}(\varphi_{1,*}\theta),\varphi_{1,*}\gamma\rangle.
\end{align*}
Since $m(P)\neq 0$ we have $\varphi_{1,*}\gamma\neq 0$ and 
$\varphi_{1,*}\theta\neq 0$.  Using Beilinson conjectures we finally get 
\begin{equation*} 
m(P)= * L'(E_1,0).
\end{equation*}
\end{proof}

\section{Formulas of mixed type}

\label{four}
Another interesting example of Boyd is given by 
\begin{equation} \label{Boyd_is_given_by}
P(t_1,t_2):=(t_1^2+1)^2 t_2^2+2 t_1 t_2+1.
\end{equation}
Numerically evidence suggests
\begin{equation} \label{evidence_suggests}
m(P)=* L'(E,0)+* L'(\chi,-1)
\end{equation}
to be true where $E$ is an elliptic curve over $\mathbb{Q}$ (defined as
usual) and 
$\chi$ is the non-trivial Dirichlet character of  
$\mathbb{Z}/3\mathbb{Z}$. \\
Using the notation of chapter \ref{one} we have
\begin{equation*}
R_P=\{(\zeta_3,-\zeta_3^{-1}),(\zeta_3^{-1},-\zeta_3),
(\zeta_6,-\zeta_6^{-1}),(\zeta_6^{-1},-\zeta_6)\}
\end{equation*}
where $\zeta_3=\exp(\frac{2\pi i}{3})$ and
$\zeta_6=\exp(\frac{\pi i}{3})$. 
Theorem \ref{following} gives us 
\begin{equation} \label{th_gives_us}
m(P)=\langle [C_2],[A]\otimes (2\pi i)^{-1}\rangle
\end{equation}
for a certain class $[A]\otimes (2\pi i)^{-1}\in 
H_1((Z^*(P)^{\text{reg}},R_P)/\mathbb{R};\mathbb{Z}(-1))$. \\
Let $E$ denote as usual the non-singular projective model of 
$Z(P)$. This is an elliptic curve defined over $\mathbb{Q}$. 
Consider $R:=R_P$ and $Z^*(P)^{\text{reg}}$ as subvarieties of $E$. \\
Set $R_{/\mathbb{Q}}=\spec \mathbb{Q}(\mu_{12})$. We can view 
$R_{/\mathbb{Q}}$ as subscheme of $E_{/\mathbb{Q}}$ in such a way that 
the points of $R_{/\mathbb{Q}}(\bar{\mathbb{Q}})$ correspond to the points of 
$R$ in $E(\bar{\mathbb{Q}})$. Therefore we denote 
$R_{/\mathbb{Q}}$ also by $R$. \\
Consider $t_1,t_2$ as rational functions on $E$. Clearly we have 
$\{t_1,t_2\}\in$ \linebreak
$H^2_{\cal M}(E,\mathbb{Q}(2))$.  
Since $t_i(Q)$ is a root of unity for $i\in \{1,2\}$ and for every 
$Q\in R$ we can also view 
$\{t_1,t_2\}$ as an element of $H^2_{\cal M}(E,R;\mathbb{Q}(2))$. 
We have 
$r_{\cal D}(\{f,g\})=[C_2]$ even in the relative situation. \\
In what follows set 
\begin{equation*}
\Gamma:=[C_2] \in 
H^2_{\cal D}((E,R)_{/\mathbb{R}},\mathbb{R}(2))=
H^1((E,R)/\mathbb{R},\mathbb{R}(1))
\end{equation*}
and view $\gamma:=[A]\otimes (2\pi i)^{-1}$ as an element of 
$H_1((E,R)/\mathbb{R};\mathbb{Z}(-1))$. With this notation 
(\ref{th_gives_us}) reads  
\begin{equation*}
m(P)=\pm \langle \Gamma,\gamma\rangle.   
\end{equation*} 
One has the following birational map on $Z(P)$:
\begin{align*}
\sigma:Z(P) & \rightarrow Z(P) \\
 (t_1,t_2) &\mapsto \left(t_1,\frac{t_2}{-1-2t_1t_2}\right).
\end{align*}
This can be extended to an involution on $E$, which is defined over 
$\mathbb{Q}$.  It is easy to see that $\sigma|_R=\id$ and that for 
$[C_2]\in H^2_{\cal D}(E_{/\mathbb{R}},\mathbb{R}(2))=
H^1(E/\mathbb{R},\mathbb{R}(1))$ 
\begin{equation*}
\sigma^*[C_2]=-[C_2].
\end{equation*}
From relative, long exact sequences of algebraic topology we get for cohomology
\begin{equation*}
\ldots \rightarrow H^0(R/\mathbb{R},\mathbb{R}(1)) 
\stackrel{\delta^*}{\rightarrow}
H^1((E,R)/\mathbb{R};\mathbb{R}(1))\stackrel{j^*}{\rightarrow} 
H^1(E/\mathbb{R},\mathbb{R}(1)) \rightarrow 0.
\end{equation*}
and for homology
\begin{equation*}
0\rightarrow H_1(E/\mathbb{R},\mathbb{R}(-1))\stackrel{j_*}{\rightarrow}
H_1((E,R)/\mathbb{R};\mathbb{R}(-1)) \stackrel{\delta_*}{\rightarrow} 
H_0(R/\mathbb{R},\mathbb{R}(-1))\rightarrow \ldots.
\end{equation*}
Here $j:(E,\emptyset)\rightarrow (E,R)$ denotes 
the inclusion and $\delta_*$ (respectively $\delta^*$) the boundary 
operator. \\
Our involution $\sigma$ gives us an $\langle\sigma\rangle$-operation on all of
the above groups which makes all occuring homomorphisms and the several 
pairings $\langle .,.\rangle$ equivariant.
Forcing our sequences to be short exact and choosing 
$\langle\sigma\rangle$-equivariant homomorphisms $s$ and $t$ 
subject to the conditions  
$j^*\circ s=\id$ and $\delta_*\circ t=\id$ we have the two decompositions
\begin{align*}
\Gamma &= \delta^*\Gamma^0 + s\Gamma^1 && \text{and} \\
\gamma &= j_*\gamma_1 + t\gamma_0
\end{align*}
where
\begin{align*}
\Gamma^0 &\in H^0(R/\mathbb{R},\mathbb{R}(1)), \\
\Gamma^1 &\in H^1(E/\mathbb{R},\mathbb{R}(1)), \\
\gamma_0 &\in H_0(R/\mathbb{R},\mathbb{Q}(-1)) && \text{ and} \\
\gamma_1 &\in H_1(E/\mathbb{R},\mathbb{Q}(-1)).
\end{align*}
Clearly we have 
$\Gamma^1=[C_2]\in H^1(E/\mathbb{R},\mathbb{R}(1))$ and therefore
$\sigma \Gamma^1=-\Gamma^1$. Using this and the fact that $\sigma$ operates 
trivially on $H_0(R/\mathbb{R},\mathbb{Q}(-1))$ we get 
\begin{equation*}
m(P)= 
\pm\underbrace{\langle \Gamma^1,\gamma_1\rangle}_{I}
\pm \underbrace{\langle\Gamma^0,\gamma_0\rangle}_{II}. 
\end{equation*} 
Our usual 
reasoning shows us (modulo Beilinson's conjectures) that 
\begin{equation*}
\langle \Gamma^1,\gamma_1\rangle = * L'(E,0)
\end{equation*} 
which takes care of term I. To give a proper meaning to term II is
much harder. It turns out that we are not totally free in choosing a
splitting of the above cohomology sequence. \\
For every two geometric points of $R$ the difference is a
torsion point of the elliptic curve $E$. It should be exact this property 
(as we will indicate in the next chapter) that allows us to choose 
splittings $s$ which make the whole following diagram commute:
\begin{equation}  \label{diagram_commute}
\begin{array}{ccccccc}
H_{\cal M}^1(R,\mathbb{Q}(2)) & \stackrel{\delta^*}{\rightarrow} & 
H_{\cal M}^2(E,R;\mathbb{Q}(2)) 
& \genfrac{}{}{0pt}{}{\stackrel{s}{\leftarrow}}{\stackrel{j^*}{\rightarrow}} 
& H^2_{\cal M}(E,\mathbb{Q}(2)) & \rightarrow & 0 \\
\Big\downarrow\vcenter{\rlap{$r_{\cal D}$}} & 
 & \Big\downarrow\vcenter{\rlap{$r_{\cal D}$}} & 
 & \Big\downarrow\vcenter{\rlap{$r_{\cal D}$}} & &    \\ 
H_{\cal D}^1(R_{/\mathbb{R}},\mathbb{R}(2)) 
& \stackrel{\delta^*}{\rightarrow} & 
H_{\cal D}^2((E,R)_{/\mathbb{R}};\mathbb{R}(2)) 
& \genfrac{}{}{0pt}{}{\stackrel{s}{\leftarrow}}{\stackrel{j^*}{\rightarrow}}  
& H^2_{\cal D}(E_{/\mathbb{R}},\mathbb{R}(2)) & 
\rightarrow & 0 \\
\Big\| & & \Big\| & & \Big\| & &    \\ 
 H^0(R/\mathbb{R},\mathbb{R}(1)) & 
\stackrel{\delta^*}{\rightarrow} & H^1((E,R)/\mathbb{R};\mathbb{R}(1)) & 
\genfrac{}{}{0pt}{}{\stackrel{s}{\leftarrow}}{\stackrel{j^*}{\rightarrow}} 
& H^1(E/\mathbb{R},\mathbb{R}(1)) &\rightarrow & 0.
\end{array}
\end{equation}
The element $B:=\{f,g\}\in H_{\cal M}^2(E,R;\mathbb{Q}(2))$ decomposes 
as follows 
\begin{equation*}
B =\delta^*B^0+sB^1  
\end{equation*} 
where 
\begin{align*}
B^0 &\in H_{\cal M}^1(R,\mathbb{Q}(2)) && \text{and} \\ 
B^1 &\in H_{\cal M}^2(E,\mathbb{Q}(2)).
\end{align*}
Evaluating $r_{\cal D}$ at $B$ and comparing to $\Gamma$ gives 
\begin{equation*}
\delta^*\Gamma^0+s\Gamma^1=(r_{\cal D}\circ\delta^*)B^0+
(r_{\cal D}\circ s) B^1. 
\end{equation*} 
Using $r_{\cal D}\circ s=s\circ r_{\cal D}$ and 
$r_{\cal D}B^1=\Gamma^1$ we get  
\begin{equation*}
r_{\cal D}B^0-\Gamma^0\in \ker\delta^*
\end{equation*}
which in turn amounts to
\begin{equation*} 
\langle\Gamma^0,\gamma_0\rangle=\langle r_{\cal D}B^0,\gamma_0\rangle. 
\end{equation*}
Using Beilinson's theorem on special values of Dirichlet L-functions
and especially his explicit description of the regulator map 
\begin{equation*}
r_{\cal D}: H^1_{\cal M}(R,\mathbb{Q}(2)) \rightarrow 
H^1_{\cal D}(R_{/\mathbb{R}},\mathbb{R}(2)) 
\end{equation*}
(see \cite{beil85} or \cite{neuk88}) we can interprete 
$\langle r_{\cal D}B^0,\gamma_0\rangle$ in the 
way we intended to do. 
In our case Beilinson's theorem says that there is a map 
\begin{equation*}
\varepsilon_2:\mu_{12}-\{1\}\rightarrow H^1_{\cal M}(R,\mathbb{Q}(2)),
\end{equation*}
such that 
\begin{equation*}
H^1_{\cal M}(R,\mathbb{Q}(2))=
\mathbb{Q}\cdot(\varepsilon_2(\xi)-\varepsilon_2(\xi^{-1}))\oplus 
\mathbb{Q}\cdot(\varepsilon_2(\xi^5)-\varepsilon_2(\xi^{-5})). 
\end{equation*}
for $\xi=e^{\pi i/6}$. Now let $\psi$ be the non-trivial Dirichlet character 
of $\mathbb{Z}/4\mathbb{Z}$ and let $\chi$ be as above. Beilinson's theorem 
further tells us
\begin{align*}
r_{\cal D}(\varepsilon_2(\xi)-\varepsilon_2(\xi^{-1}))& =
(q_1 L'(\psi,-1)+q_2 L'(\chi,-1)) \eta \\
 & \quad + (q_1 L'(\psi,-1)-q_2 L'(\chi,-1)) \eta'
\end{align*}
and 
\begin{align*}
r_{\cal D}(\varepsilon_2(\xi^5)-\varepsilon_2(\xi^{-5}))&=
(q_1 L'(\psi,-1)-q_2 L'(\chi,-1)) \eta \\
 & \quad + (q_1 L'(\psi,-1)+q_2 L'(\chi,-1)) \eta'
\end{align*}
for $q_1,q_2\in \mathbb{Q}^*$. Here
\begin{equation*} 
\begin{array}{ccc}
\eta=\left(
\begin{array}{c}
2\pi i \\ 0 \\ -2\pi i \\ 0
\end{array}
\right) & \text{and} &
\eta'=\left(
\begin{array}{c}
0 \\ 2\pi i \\ 0 \\ -2\pi i 
\end{array}
\right)
\end{array}
\end{equation*} 
is a $\mathbb{Q}$-basis of $H^1_{\cal D}(R_{/\mathbb{R}},\mathbb{R}(2))=
[(\mathbb{C}/\mathbb{R}(2))^{R(\mathbb{C})}]^+$.
Therefore it shows up
that we have 
\begin{equation} 
\langle r_{\cal D}B^0,\gamma_0\rangle= 
\kappa_1 L'(\chi,-1)+\kappa_2 L'(\psi,-1)
\end{equation} 
where $\kappa_1,\kappa_2\in \mathbb{Q}$. Together with term I we get   
\begin{equation*} 
m(P)=* L'(E,0)+\kappa_1 L'(\chi,-1)+\kappa_2 L'(\psi,-1).
\end{equation*}
Unfortunately this falls short of ``proving'' 
(\ref{evidence_suggests}).

\section{A general philosophy}

\label{five}
In this last chapter let us first have a look at another interesting
example due to Boyd. Let 
\begin{equation} \label{example_due_to_Boyd_Let}
P(t_1,t_2):=t_1^2 t_2^2+t_1+t_2+1.
\end{equation} 
The zero locus $Z(P)$ is birationally equivalent to the elliptic curve
$E$ defined by 
\begin{equation*}
y^2+y=x^3-x^2.
\end{equation*}
An easy calculation gives 
\begin{equation*} 
Z(P)\cap T^2=\{(-\zeta,\zeta)|\zeta^4=-1\}\cup\{(-1,-1)\}.
\end{equation*}
Denote this set by $R$ and consider it as a subvariety of $E$.    
Boyd has calculated $m(P)$ numerically using a precision of $25$
decimal places. 
He gave this value together with the numerical values of $L'(E,0)$ and
$L'(\chi,-1)$ for some Dirichlet characters of conductor $8$ as an
input to a
linear dependence finder (like for example {\it lindep} in
the package
{\it Pari}; see \cite{bat_et_al96}). An intensive search using this
method failed to produce a formula like (\ref{such_as}) or 
(\ref{evidence_suggests}).  \\
Applying \ref{following} to the polynomial 
$P(t_1t_2,t_2)=t_1^2t_2^4+t_1t_2+t_2+1$ we easily 
see that  
\begin{equation*}
m(P)=\pm\langle r_{\cal D}(\Psi),\Phi\rangle
\end{equation*} 
for certain elements $\Psi\in H^2_{\cal M}(E,R;\mathbb{Q}(2))$ and  
$\Phi\in H_1(E,R;\mathbb{Q}(-1))$. 
The question that arises from these considerations is: what is the
basic difference between examples like (\ref{Boyd_is_given_by}) and 
(\ref{example_due_to_Boyd_Let})? \\
Denote by $P_1,\ldots,P_5$ the five geometric points of $R$ (again
considered as points on $E$). 
Using {\it Pari} the author has calculated the multiples 
$[m](P_i-P_j)$ on $E$ for $i\neq j$ and $m\le 1000$. These
calculations give strong evidence that the $P_i-P_j$ for $i\neq j$ 
are no torsion points on $E$ at all. \\
As is easily seen the geometric points of the boundary $R$ of 
chapter \ref{four} have this property of every difference of points being a
torsion point on the elliptic curve. This should answer our above
question since in what follows we will give a heuristical argument why
the mentioned property is crucial in finding a splitting in $K$-theory like 
(\ref{diagram_commute}). \\
Let $K/\mathbb{Q}$ be a finite Galois extension subject to the
condition that all geometric points of $R$ are $K$-rational. Denote by
${\cal MM}_K$ the (not yet constructed) category of mixed motives over
$K$. Set $U:=E-R$. By base extension and by applying the
functor $H^*$ we get motives $H^*(E_K),H^*(U_K)$ and $H^*(R_K)$. Take a
look at the exact sequence 
\begin{equation*}
0\rightarrow H^1(E_K)(1)\rightarrow H^1(U_K)(1)
\rightarrow  H^2(E_K,U_K)(1)
\stackrel{j^*}{\rightarrow} H^2(E_K)(1)
\end{equation*} 
in ${\cal MM}_K$ and force it to be a short exact sequence 
\begin{equation} \label{short_exact_sequence}
0\rightarrow H^1(E_K)(1)\rightarrow H^1(U_K)(1)
\rightarrow \ker(j^*)\rightarrow 0.
\end{equation}  
General motivic folklore states that the splitting of 
(\ref{short_exact_sequence}) in ${\cal MM}_K$ is equivalent to our above
condition. Let us assume that this condition holds. The sequence 
(\ref{short_exact_sequence}) is dual to   
\begin{equation} \label{is_dual_to} 
0\rightarrow \img(\delta) \rightarrow  H^1(E_K,R_K)
\rightarrow H^1(E_K)\rightarrow 0
\end{equation}   
where $\delta:H^0(R_K)\rightarrow  H^1(E_K,R_K)$. Again according to
general motivic folklore $\ext^1(\mathbb{Q}(0),.)$ groups in 
${\cal MM}_K$ resp. $\ext^1(\mathbb{R}(0),.)$ groups in 
a certain category of mixed Hodge structures ${\cal MH}_R$ are 
naturally isomorphic to motivic cohomology resp. Deligne-Beilinson 
cohomology. Therefore applying those functors to  
(\ref{is_dual_to}) we get a compatible 
splitting 
\begin{equation*}
\begin{array}{ccccc} 
H_{\cal M}^2(E_K,R_K;\mathbb{Q}(2)) 
& \genfrac{}{}{0pt}{}{\stackrel{s}{\leftarrow}}{\rightarrow} 
& H^2_{\cal M}(E_K,\mathbb{Q}(2)) & \rightarrow & 0 \\
\Big\downarrow\vcenter{\rlap{$r_{\cal D}$}} & 
 & \Big\downarrow\vcenter{\rlap{$r_{\cal D}$}} & &    \\  
H_{\cal D}^2(E_K,R_K;\mathbb{R}(2)) 
& \genfrac{}{}{0pt}{}{\stackrel{s}{\leftarrow}}{\rightarrow}  
& H^2_{\cal D}(E_K,\mathbb{R}(2)) & 
\rightarrow & 0. 
\end{array}
\end{equation*}
Using Galois descent we finally produce a splitting like 
(\ref{diagram_commute}). \\ 

\vspace{1cm}
\hspace{6cm} 
{\small
\begin{tabular}{ll}
Hubert Bornhorn & \\
Westf{\"a}lische Wilhelms-Universit{\"a}t M{\"u}nster & \\
Mathematisches Institut & \\
Einsteinstra{\ss}e 64 & \\
48149 M{\"u}nster & \\ 
Germany & \\
Hubert.Bornhorn@t-online.de & 
\end{tabular}
}



\bibliographystyle{amsplain}
\bibliography{mkl}

\end{document}